\theoremstyle{plain}
\newtheorem{Thm}{Theorem}[section]
\newtheorem{Lem}[Thm]{Lemma}
\newtheorem{Cor}[Thm]{Corollary}
\newtheorem{Pro}[Thm]{Proposition}
\theoremstyle{definition}
\newtheorem{Def}[Thm]{Definition}
\theoremstyle{remark}
\newtheorem{Rem}[Thm]{Remark}
\numberwithin{equation}{section}
\newenvironment{cor}[1]{\begin{Cor}\label{cor:#1}}{\end{Cor}}
\newenvironment{dfn}[1]{\begin{Def}\label{def:#1}}{\end{Def}}
\newenvironment{lem}[1]{\begin{Lem}\label{lem:#1}}{\end{Lem}}
\newenvironment{pro}[1]{\begin{Pro}\label{pro:#1}}{\end{Pro}}
\newenvironment{rem}[1]{\begin{Rem}\label{rem:#1}}{\end{Rem}}
\newenvironment{thm}[1]{\begin{Thm}\label{thm:#1}}{\end{Thm}}
\newcommand{\COR}[1]{Corollary~\textup{\ref{cor:#1}}}
\newcommand{\DEF}[1]{Definition~\textup{\ref{def:#1}}}
\newcommand{\LEM}[1]{Lemma~\textup{\ref{lem:#1}}}
\newcommand{\PRO}[1]{Proposition~\textup{\ref{pro:#1}}}
\newcommand{\THM}[1]{Theorem~\textup{\ref{thm:#1}}}
\newcommand{\CCC}{\mathbb{C}}\newcommand{\RRR}{\mathbb{R}}\newcommand{\TTT}{\mathbb{T}}
\newcommand{\BBb}{\CMcal{B}}\newcommand{\EEe}{\CMcal{E}}\newcommand{\GGg}{\CMcal{G}}
\newcommand{\HHh}{\CMcal{H}}\newcommand{\UUu}{\CMcal{U}}\newcommand{\WWw}{\CMcal{W}}
\newcommand{\FfF}{\EuScript{F}}\newcommand{\IiI}{\EuScript{I}}\newcommand{\JjJ}{\EuScript{J}}
\newcommand{\MmM}{\EuScript{M}}\newcommand{\NnN}{\EuScript{N}}\newcommand{\WwW}{\EuScript{W}}
\newcommand{\Bb}{\mathfrak{B}}\newcommand{\Dd}{\mathfrak{D}}\newcommand{\Ff}{\mathfrak{F}}
\newcommand{\Gg}{\mathfrak{G}}\newcommand{\Ii}{\mathfrak{I}}\newcommand{\Mm}{\mathfrak{M}}
\newcommand{\Ss}{\mathfrak{S}}\newcommand{\bB}{\mathfrak{b}}
\newcommand{\aaA}{\mathscr{A}}\newcommand{\bbB}{\mathscr{B}}\newcommand{\ffF}{\mathscr{F}}
\newcommand{\iiI}{\mathscr{I}}\newcommand{\kkK}{\mathscr{K}}\newcommand{\mmM}{\mathscr{M}}
\newcommand{\ssS}{\mathscr{S}}\newcommand{\uuU}{\mathscr{U}}\newcommand{\xxX}{\mathscr{X}}
\newcommand{\zzZ}{\mathscr{Z}}
\newcommand{\sSS}{\pmb{S}}\newcommand{\tTT}{\pmb{T}}
\newcommand{\ueA}{\textup{\textsf{A}}}\newcommand{\ueB}{\textup{\textsf{B}}}
\newcommand{\ueJ}{\textup{\textsf{J}}}\newcommand{\ueS}{\textup{\textsf{S}}}
\newcommand{\ueT}{\textup{\textsf{T}}}\newcommand{\ueX}{\textup{\textsf{X}}}
\newcommand{\ueY}{\textup{\textsf{Y}}}
\newcommand{\dd}{\colon}\newcommand{\df}{\stackrel{\textup{def}}{=}}\newcommand{\geqsl}{\geqslant}
\newcommand{\leqsl}{\leqslant}\newcommand{\epsi}{\varepsilon}\newcommand{\dint}[1]{\,\textup{d} #1}
\newcommand{\Card}{\operatorname{Card}}\newcommand{\id}{\operatorname{id}}
\newcommand{\tI}{\textup{I}}\newcommand{\disj}{\perp_u}\newcommand{\sdisj}{\perp_s}
\newcommand{\contS}[3][]{\left.\bigoplus_{#2}#1\!\right.^{#3}}
\begin{document}

\title{Direct integrals of matrices}
\author[P. Niemiec]{Piotr Niemiec}
\address{Instytut Matematyki\\{}Wydzia\l{} Matematyki i~Informatyki\\{}
 Uniwersytet Jagiello\'{n}ski\\{}ul. \L{}ojasiewicza 6\\{}30-348 Krak\'{o}w\\{}Poland}
\email{piotr.niemiec@uj.edu.pl}
\thanks{The author gratefully acknowledges the assistance of the Polish Ministry of Sciences
 and Higher Education grant NN201~546438 for the years 2010--2013.}
\begin{abstract}
It is shown that each linear operator on a separable Hilbert space which generates a finite type~I
von Neumann algebra has, up to unitary equivalence, a unique representation as a direct integral
of inflations of mutually unitary inequivalent irreducible matrices. This leads to a simplification
of the so-called prime (or central) decomposition and the multiplicity theory for such operators.
The concept of so-called \textit{p-isomorphisms} between special classes of such operators is
discussed. All results are formulated in more general settings; that is, for tuples of closed
densely defined operators affiliated with finite type~I von Neumann algebras.
\end{abstract}
\subjclass[2010]{Primary 47B40; Secondary 47C15.}
\keywords{Direct integral; finite type~I von Neumann algebra; central decomposition of an operator;
 prime decomposition of a tuple of operators; spectral theorem; multiplicity theory.}
\maketitle

\section{Introduction}

In \cite{ern} Ernest proved that every bounded linear operator acting on a separable Hilbert space
has, up to unitary equivalence, a unique so-called \textit{central decomposition}. His definition
of this decomposition involved the reduction theory of von Neumann algebras (due to von Neumann
himself \cite{neu}). More precisely, the direct integral
\begin{equation}\label{eqn:di}
T = \int^{\oplus} T_{\omega} \dint{\mu(\omega)}
\end{equation}
is the central decomposition of $T$ iff $\int^{\oplus} \WwW(T_{\omega}) \dint{\mu(\omega)}$ is
the central decomposition of $\WwW(T)$ where for any bounded linear operator $S$, $\WwW(S)$ denotes
the smallest von Neumann algebra containing $S$. Recently \cite{pn1} we generalised Ernest's theorem
to the context of finite tuples of closed densely defined linear operators acting in a common
(totally arbitrary, possibly nonseparable) Hilbert space. We call our result the Prime Decomposition
Theorem (see Chapter~5 in \cite{pn1} and Theorem~5.6.14 there). In practice it is not so easy
to resolve whether the direct integral of the form \eqref{eqn:di} is the central decomposition
of a separable Hilbert space operator $T$. A necessary condition for this is that off some null set,
$\WwW(T_{\omega})$ is a factor and the operators $T_{\omega}$ and $T_{\omega'}$ for distinct
$\omega$ and $\omega'$ have no nontrivial unitary equivalent suboperators (in that case we call
the operators $T_{\omega}$ and $T_{\omega'}$ \textit{unitarily disjoint}). The main disadvantage
of both the results cited above is that the last mentioned condition is insufficient in general for
\eqref{eqn:di} to be the central decomposition of $T$. So, some bounded operators $T$ admit two
inequivalent representations as direct integrals of pairwise unitarily disjoint factor operators.
However, as we will show in this paper, if $T$ generates a finite type~I von Neumann algebra, its
central decomposition may simply be recognised: in that case, a necessary and sufficient condition
for \eqref{eqn:di} to be the central decomposition of $T$ is that off some null set, the field
$\{T_{\omega}\}$ consists of inflations of pairwise unitarily inequivalent irreducible matrices,
as shown by

\begin{thm}{0}
\begin{enumerate}[\upshape(a)]
\item For $j \in \{1,2\}$, let $\{T_j(\omega_j)\}_{\omega_j\in\Omega_j}$ be a measurable field
 defined on a standard measure space $(\Omega_j,\Mm_j,\mu_j)$ such that off some $\mu_j$-null set,
 the operators $T_j(\omega_j)$ are unitarily equivalent to inflations of pairwise unitarily
 inequivalent irreducible matrices. Then
 \begin{equation}\label{eqn:ue}
 \int^{\oplus}_{\Omega_1} T_1(\omega_1) \dint{\mu_1(\omega_1)} \textup{ and }
 \int^{\oplus}_{\Omega_2} T_2(\omega_2) \dint{\mu_2(\omega_2)} \textup{ are unitarily equivalent}
 \end{equation}
 iff there are measurable sets $Z_1 \subset \Omega_1$ and $Z_2 \subset \Omega_2$, and a Borel
 isomorphism $\Phi\dd \Omega_1 \setminus Z_1 \to \Omega_2 \setminus Z_2$ such that:
 \begin{itemize}
 \item $\mu_1(Z_1) = \mu_2(Z_2) = 0$;
 \item for any measurable set $B \subset \Omega_1 \setminus Z_1$, $\mu_1(B) = 0$ if and only if
  $\mu_2(\Phi(B)) = 0$;
 \item the operators $T_2(\Phi(\omega_1))$ and $T_1(\omega_1)$ are unitarily equivalent for any
  $\omega_1 \in \Omega_1 \setminus Z_1$.
 \end{itemize}
\item Let $T$ be a closed densely defined linear operator in a separable Hilbert space. Then $T$ is
 unitarily equivalent to a decomposable operator of the form
 \begin{equation}\label{eqn:repr}
 \int^{\oplus}_{\Omega} T(\omega) \dint{\mu(\omega)}
 \end{equation}
 where $\{T(\omega)\}_{\omega\in\Omega}$ is a measurable field defined on a standard measure space
 $(\Omega,\Mm,\mu)$ such that the operators $T(\omega)$ are unitarily equivalent to inflations
 of pairwise unitarily inequivalent irreducible matrices iff $T$ is affiliated with a finite type~I
 von Neumann algebra.
\end{enumerate}
\end{thm}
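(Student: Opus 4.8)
The plan is to exploit the rigidity of finite type~I von Neumann algebras. Such an algebra $\MmM$ acting on a separable Hilbert space has centre isomorphic to $L^\infty$ of a standard measure space, and von~Neumann's reduction theory disintegrates it into factors; because each fibre is simultaneously finite \emph{and} type~I it is a finite matrix algebra $M_d(\CCC)$, so the operator correspondingly disintegrates into inflations of irreducible matrices -- the possible unboundedness of $T$ being spread ``across'' the integral rather than concentrated in any fibre. Both halves of the theorem then amount to recognising \emph{when} such a disintegration coincides with the central (prime) decomposition of \cite{pn1}, and I would organise everything around that question.

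For (b), the implication ``$\Leftarrow$'' is the easy direction: if $T$ is unitarily equivalent to $\int^\oplus_\Omega T(\omega)\dint{\mu(\omega)}$ with each $T(\omega)$ an inflation of an irreducible matrix, then (since the Borel functional calculus commutes with direct integrals) $T$ is affiliated with $\int^\oplus_\Omega(M_{d(\omega)}(\CCC)\otimes I)\dint{\mu(\omega)}$, a direct integral of finite-dimensional, hence finite type~I, factors over a standard space, which is itself finite type~I. For ``$\Rightarrow$'': first $\WwW(T)$, being a von Neumann subalgebra of a finite type~I algebra, is again finite type~I (a fact I would isolate as a separate lemma); then I apply the Prime Decomposition Theorem of \cite{pn1} (Ernest's theorem \cite{ern} in the bounded case) to write $T\simeq\int^\oplus_\Omega T(\omega)\dint{\mu(\omega)}$ over a standard $(\Omega,\Mm,\mu)$ with each $\WwW(T(\omega))$ a factor and with distinct fibres unitarily disjoint, off a null set. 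Finiteness and being type~I descend to the central fibres, so $\WwW(T(\omega))\cong M_{d(\omega)}(\CCC)$ with $d(\omega)<\infty$; a closed densely defined operator affiliated with an algebra having only finitely many projections is bounded and lies in it, so $T(\omega)=M_\omega\otimes I_{\kkK_\omega}$ with $M_\omega\in M_{d(\omega)}(\CCC)$, and $\WwW(T(\omega))$ being the whole factor forces $\WwW(M_\omega)=M_{d(\omega)}(\CCC)$, i.e.\ $M_\omega$ is irreducible; unitary disjointness of distinct fibres then says the matrices $M_\omega$ are pairwise unitarily inequivalent off a null set.

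For (a) the decisive preliminary -- which I would call the Recognition Lemma -- is that a direct integral of inflations of pairwise unitarily inequivalent irreducible matrices over a standard measure space \emph{is} the central decomposition of the resulting operator; in particular $L^\infty(\Omega,\mu)$, acting diagonally, is then the centre of $\WwW(\int^\oplus_\Omega T(\omega)\dint{\mu(\omega)})$. Granting it, the ``only if'' direction is nearly formal: a unitary implementing \eqref{eqn:ue} carries $\WwW$ of the first integral onto $\WwW$ of the second, hence $L^\infty(\Omega_1,\mu_1)$ onto $L^\infty(\Omega_2,\mu_2)$; von~Neumann's point realisation of isomorphisms of the measure algebras of standard spaces turns this into a measure-class-preserving Borel isomorphism $\Phi\dd\Omega_1\setminus Z_1\to\Omega_2\setminus Z_2$ off null sets (the first two bullets), and a unitary carrying one diagonal algebra onto another compatibly with $\Phi$ is necessarily of the form $\int^\oplus_{\Omega_1\setminus Z_1}(\varrho(\omega_1)U_{\omega_1})\dint{\mu_1(\omega_1)}$ for some fibre unitaries $U_{\omega_1}$ and a Radon--Nikodym factor $\varrho$, whence \eqref{eqn:ue} yields $U_{\omega_1}T_1(\omega_1)U_{\omega_1}^{*}=T_2(\Phi(\omega_1))$ a.e.\ (the third bullet); alternatively this direction drops straight out of the uniqueness clause of \cite{pn1}. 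For ``if'' I would reverse the construction: after splitting each $\Omega_j$ into the pieces on which the fibre dimension is constant (which $\Phi$ must respect, by the third bullet), a measurable selection theorem supplies a measurable field of unitaries $U_{\omega_1}$ conjugating $T_1(\omega_1)$ onto $T_2(\Phi(\omega_1))$ -- the admissible unitaries forming a non-empty compact set varying measurably in $\omega_1$ -- and $\int^\oplus(\varrho\cdot U_{\omega_1})\dint{\mu_1}$ transported along $\Phi$ is, by the first two bullets, the required unitary equivalence.

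The step I expect to be the main obstacle is the Recognition Lemma, and that is exactly where the finite type~I hypothesis is indispensable: for general factor fibres, ``factorial and pairwise disjoint'' is necessary but \emph{not} sufficient for centrality -- the defect of \cite{ern,pn1} noted in the Introduction -- as the example of a constant field already shows. All the other ingredients (the disintegration, the descent of type and finiteness, the measurable selections, the point realisation, the assembly of decomposable unitaries) are standard reduction theory. To prove the Recognition Lemma I would show that $L^\infty(\Omega,\mu)$ is contained in $\WwW(\int^\oplus_\Omega T(\omega)\dint{\mu(\omega)})$; since the von Neumann algebra generated by a decomposable operator together with the diagonal algebra is the full direct integral of the fibrewise generated algebras, this reduces to showing that the weakly closed $*$-algebra generated by a measurable field of pairwise unitarily inequivalent irreducible matrices contains every diagonal operator -- i.e.\ to separating almost all distinct fibres by elements, and spectral projections thereof, of the generated algebra. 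I anticipate this needs a Lusin-type, measurable-selection argument using that each finite matrix factor is singly $*$-generated in a quantitatively uniform way; it is the part I would spend the most effort on.
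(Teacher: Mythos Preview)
Your proposal is correct and locates the same crux as the paper: what you call the Recognition Lemma is precisely the content of Section~2 (culminating in \THM{meas}), namely that every $\sigma$-finite Borel measure on the transversal $\ssS(k)$ of irreducible $k$-tuples is supported on a \emph{measurable domain} in the sense of \cite{pn1}, which makes the field \emph{regular} and hence identifies the direct integral as the prime decomposition. The two routes diverge in organisation and in the mechanism for this key step. You argue in classical reduction-theory language (centrality of $L^\infty(\Omega,\mu)$, von~Neumann point realisation of measure-algebra isomorphisms, decomposable unitaries with Radon--Nikodym factor), whereas the paper stays inside the \cite{pn1} framework throughout: once regularity is established, your ``only if'' direction falls out of Corollary~5.4.7 in \cite{pn1} (mutual absolute continuity of the transport measures on the space $\Ff$ of unitary equivalence classes), with the Borel isomorphism $\Phi$ assembled from the one-to-one measurable maps $\Phi_j\dd\Omega_j\to\Ff$, bypassing the point-realisation step entirely. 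For the Recognition Lemma itself, where you anticipate ``a Lusin-type argument using that each finite matrix factor is singly $*$-generated in a quantitatively uniform way'', the paper's mechanism is close in spirit but more concrete: it reduces via inner regularity (\LEM{cantor}) to compact \emph{totally disconnected} subsets $\kkK\subset\ssS_n(k)$, and on such a $\kkK$ invokes a factorial Stone--Weierstrass theorem (\LEM{s-w}) to show that $C(\kkK,\mmM_n)$ is generated as a unital $C^*$-algebra by the coordinate maps $(X_1,\ldots,X_k)\mapsto\bB(X_j)$; the characteristic functions of clopen sets are then uniformly approximable by $*$-polynomials in the $\bB(X_j)$, which is exactly the separating family the definition of a measurable domain demands. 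This is cleaner than trying to quantify ``uniformly singly generated''. Both approaches reach the same theorem; the paper's buys a seamless interface with the prime-decomposition machinery of \cite{pn1} (and its nonseparable generalisations), while yours would be more self-contained for a reader not already fluent in that reference.
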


A certain generalisation of the above theorem (which will be stated and proved in Section~3) shall
be applied in the sequel to prove that certain classes of tuples of operators are
\textit{p-isomorphic}. Roughly speaking, the existence of a p-isomorphism between two classes
asserts that these classes have the same Borel (as well as spectral) complexity (p-isomorphisms
preserve direct sums of arbitrary collections and prime decompositions). As a main result in this
direction we will obtain a theorem stating that the class of all $k$-tuples (where $k$ is
arbitrarily fixed) of closed densely defined linear operators affiliated with finite type~I
von Neumann algebras is p-isomorphic to the class of all (single) unitary operators.\par
The paper is organised as follows. In Section~2 we study finite Borel measures on sets of tuples
of matrices. The main property established there is used in Section~3 to prove \THM{0} and its
certain generalisation to finite tuples of operators (\THM{pr_deco} below). The last, fourth, part
is devoted to so-called \textit{finite type~I ideals} of unitary equivalence classes of tuples
of operators. We recall there our Prime Decomposition Theorem from \cite{pn1} and prove that some
finite type~I ideals are p-isomorphic. We conclude the paper with a concept of multiplicity theory
for finite tuples of operators which are affiliated with finite type~I von Neumann algebras.

\subsection*{Notation and terminology} All Hilbert spaces (as well as matrices) are complex.
An \textit{operator} means a closed linear operator densely defined in a Hilbert space. Each
$k$-tuple of operators (resp. matrices) consists of operators acting in a common, but arbitrary,
Hilbert space (resp. of square matrices whose degrees coincide). For a $k$-tuple $(T_1,\ldots,T_k)$
of bounded operators, we denote by $\WwW(T_1,\ldots,T_k)$ the smallest von Neumann algebra which
contains each of $T_1,\ldots,T_k$. If $T_1,\ldots,T_k$ are merely closed and densely defined, let
$\WwW'(T_1,\ldots,T_k)$ stand for the von Neumann algebra generated by all unitary operators $U$ for
which $U T_j U^{-1} = T_j\ (j=1,\ldots,k)$. Additionally, we put $\WwW''(T_1,\ldots,T_k) =
(\WwW'(T_1,\ldots,T_k))'$. Recall that $\WwW''(T_1,\ldots,T_k) = \WwW(T_1,\ldots,T_k)$ provided
$T_1,\ldots,T_k$ are bounded. The $k$-tuple $(T_1,\ldots,T_k)$ is said to be \textit{irreducible}
if $\WwW'(T_1,\ldots,T_k)$ consists (precisely) of scalar multiples of the identity operator. Two
$k$-tuples $(T_1,\ldots,T_k)$ and $(T_1',\ldots,T_k')$ are \textit{unitarily equivalent} if there
exists a single unitary operator $U$ (between respective Hilbert spaces) such that $U T_s U^{-1} =
T_s'$ for each $s \in \{1,\ldots,k\}$. By an \textit{inflation} of $(T_1,\ldots,T_k)$ we mean
a $k$-tuple of the form $(\bigoplus_{s \in S} T_1^{(s)},\ldots,\bigoplus_{s \in S} T_k^{(s)})$ where
$S$ is a nonempty set and $T_j^{(s)} = T_j$ for any $s \in S$ and $j \in \{1,\ldots,k\}$ (with this
terminology we follow e.g. Conway \cite{con} --- see Example~5.3 there). The $\bB$-transform (cf.
\cite{pn1}) of an operator $T$ is given by $\bB(T) \df T(I+|T|)^{-1}$ where $|T| =
(T^* T)^{\frac12}$ is the absolute value of $T$ and $I$ is the identity operator on a respective
Hilbert space. Whenever $X$ is a topological space, $\Bb(X)$ denotes the $\sigma$-algebra of all
Borel sets in $X$; that is, $\Bb(X)$ is the smallest $\sigma$-algebra which contains all open
subsets of $X$. A \textit{map} is a continuous function.\par
For an easier understanding of the paper, it is recommended to study treatises \cite{pn1} and
\cite{ern}.

\section{Measures on sets of tuples of matrices}

For each $n \geqsl 1$ denote by $\mmM_n$ the $C^*$-algebra of all $n \times n$ matrices and
by $\uuU_n$ the unitary group of $\mmM_n$ (with unit $I$). Fix $k \geqsl 1$ (as the length
of tuples) and equip the vector space $\mmM_n^k$ of all $k$-tuples of $n \times n$ matrices with
the action of $\uuU_n$ given by
\begin{equation*}
U.(A_1,\ldots,A_k) = (U A_1 U^{-1},\ldots,U A_k U^{-1}) \qquad (U \in \uuU_n,\ (A_1,\ldots,A_k) \in
\mmM_n^k).
\end{equation*}
Since our main interest are irreducible $k$-tuples, the following simple lemma may be helpful
in understanding the notion introduced above. We leave its proof to the reader (use the fact that
$\WwW'(A_1,\ldots,A_k)$ is a von Neumann algebra).

\begin{lem}{irr}
A tuple $(A_1,\ldots,A_k) \in \mmM_n^k$ is irreducible iff for any $U \in \uuU_n$,
\begin{equation*}
U.(A_1,\ldots,A_k) = (A_1,\ldots,A_k) \iff U = \gamma \cdot I \textup{ for some } \gamma \in \CCC.
\end{equation*}
\end{lem}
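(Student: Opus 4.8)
The plan is to identify the set of unitaries fixing the tuple with the unitary group of a concrete von Neumann algebra and then read off the equivalence from the definition of irreducibility. I would begin with the elementary observation that, for $U \in \uuU_n$, the equality $U.(A_1,\ldots,A_k) = (A_1,\ldots,A_k)$ is equivalent to $U A_j = A_j U$ for every $j$, and --- applying $*$ to these relations and using $U^* = U^{-1}$ --- also to $U A_j^* = A_j^* U$ for every $j$. Consequently the set $G$ of all $U \in \uuU_n$ with $U.(A_1,\ldots,A_k) = (A_1,\ldots,A_k)$ equals the unitary group of the von Neumann algebra $\NnN \df \{A_1,A_1^*,\ldots,A_k,A_k^*\}'$.

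Next I would show $\WwW'(A_1,\ldots,A_k) = \NnN$. The inclusion $\WwW'(A_1,\ldots,A_k) \subseteq \NnN$ holds because every generator of $\WwW'(A_1,\ldots,A_k)$ belongs to $G \subseteq \NnN$, and $\NnN$ is a von Neumann algebra while $\WwW'(A_1,\ldots,A_k)$ is by definition the smallest one containing those generators. For the reverse inclusion, recall that a von Neumann algebra is the linear span of its unitaries; since the unitary group of $\NnN$ is exactly $G$, which is contained in $\WwW'(A_1,\ldots,A_k)$ by definition, and $\WwW'(A_1,\ldots,A_k)$ is a linear subspace, we get $\NnN \subseteq \WwW'(A_1,\ldots,A_k)$. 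Everything here takes place in the finite-dimensional algebra $\mmM_n$, so no topological subtleties arise.

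The two implications now follow at once. If $(A_1,\ldots,A_k)$ is irreducible, then $\WwW'(A_1,\ldots,A_k) = \CCC\cdot I$, so any $U \in G$ lies in $\CCC\cdot I$ and hence equals $\gamma\cdot I$ for some $\gamma \in \CCC$; the converse implication, that $U = \gamma\cdot I$ fixes the tuple, is immediate. Conversely, if $G \subseteq \CCC\cdot I$, then the unitary group of $\NnN$ is contained in $\CCC\cdot I$, whence its span $\NnN = \WwW'(A_1,\ldots,A_k)$ equals $\CCC\cdot I$, which is precisely the definition of irreducibility.

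I do not expect a genuine obstacle here; the only point deserving a moment's care is the passage from the family $\{A_1,\ldots,A_k\}$ to the self-adjoint family $\{A_1,A_1^*,\ldots,A_k,A_k^*\}$, which is exactly what makes the relevant commutant a von Neumann algebra --- the fact the author suggests using.
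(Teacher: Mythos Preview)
Your argument is correct and is precisely the route the paper intends: the author leaves the proof to the reader with the single hint ``use the fact that $\WwW'(A_1,\ldots,A_k)$ is a von Neumann algebra,'' and your proof unpacks exactly this --- identifying the stabiliser $G$ with the unitary group of the commutant $\{A_1,A_1^*,\ldots,A_k,A_k^*\}'$, showing that this commutant coincides with $\WwW'(A_1,\ldots,A_k)$ via the span-of-unitaries fact, and reading off both implications. There is nothing to add or correct.
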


We use $\iiI_n(k)$ to denote the set of all irreducible $k$-tuples of $n \times n$ matrices. It is
well-known that $\iiI_n(k)$ is an open set in $\mmM_n^k$ (hence $\iiI_n(k)$ is a separable locally
compact space). Observe also that $\iiI_n(k)$ is invariant under the action of $\uuU_n$. We now
introduce

\begin{dfn}{uecs}
Let $\Ii_n(k)$ stand for the orbit space of the action of $\uuU_n$ on $\iiI_n(k)$; that is,
$\Ii_n(k)$ consists of all sets of the form $\{U.(A_1,\ldots,A_k)\dd\ U \in \uuU_n\}$ where
$(A_1,\ldots,A_k) \in \iiI_n(k)$. Further, let $\pi_n^k\dd \iiI_n(k) \to \Ii_n(k)$ be the natural
projection. We equip $\Ii_n(k)$ with the quotient topology.
\end{dfn}

Since the group $\uuU_n$ is compact and $\iiI_n(k)$ is locally compact, it is easily seen that
$\Ii_n(k)$ is a separable metrizable locally compact space and $\pi_n^k$ is a proper map. What is
more,

\begin{pro}{select}
There exists a $\GGg_{\delta}$-set $\ssS_n(k) \subset \iiI_n(k)$ which meets the orbit of each
irreducible $k$-tuple at exactly one point; in other words, the restriction of $\pi_n^k$
to $\ssS_n(k)$ is a Borel isomorphism of $\ssS_n(k)$ onto $\Ii_n(k)$.
\end{pro}

The above result may be deduced e.g. from Corollary~1 in \S2 of Chapter~XIV in \cite{k-m}, applied
to the partition $\Ii_n(k)$ of $\iiI_n(k)$, or from Corollary~1 in \S1 of Chapter~XIV in \cite{k-m}
(see also \cite{cas}; for more information on selection theorems consult Chapter~XIV of \cite{k-m}
or \S12.D of Chapter~II in \cite{kec}).\par
To simplify further statements, we fix $k > 0$ and for any $n > 0$ a set $\ssS_n(k) \subset
\iiI_n(k)$ which witnesses the assertion of \PRO{select}, and consider the topological disjoint
union $\ssS(k) = \bigsqcup_{n=1}^{\infty} \ssS_n(k)$ of the topological spaces $\ssS_n(k)$, $n > 0$.
Note that $\ssS(k)$, as a topological disjoint union of $\GGg_{\delta}$-sets, is separable and
completely metrizable. The aim of this section is to show that all $\sigma$-finite Borel measures
on $\ssS(k)$ are supported on certain sets, which we now introduce.

\begin{dfn}{mdsud}
A set $\ffF$ of $k$-tuples of matrices is said to be a \textit{measurable domain of strong unitary
disjointness} (or, briefly, a \textit{measurable domain}) if $\ffF \cap \mmM_n^k$ is Borel for any
$n > 0$ and there is a countable collection $\{F_1,F_2,\ldots\}$ of $k$-tuples of matrices which
separates points of $\ffF$ (that is, for any two distinct tuples of $\ffF$ there is a number $s$
such that $F_s$ contains exactly one of them) and for any number $m$ there exists a sequence
$p_1,p_2,\ldots$ of complex polynomials in $2k$ noncommuting variables such that the matrices
$p_n(\bB(X_1),\ldots,\bB(X_k),\bB(X_1)^*,\ldots,\bB(X_k)^*)$ converge to $0$ for any
$(X_1,\ldots,X_k) \in \ffF \setminus F_m$ and to a respective unit matrix for any $(X_1,\ldots,X_k)
\in \ffF \cap F_m$.
\end{dfn}

Measurable domains were introduced in \cite{pn1} and used to characterise so-called \textit{prime
decompositions} of tuples of operators. A more detailed explanation of this will be given
in \THM{prime} (consult also \THM{cov} in Section~4).\par
The aim of this part is to prove

\begin{thm}{meas}
Every $\sigma$-finite Borel measure on $\ssS(k)$ is supported on a measurable domain.
\end{thm}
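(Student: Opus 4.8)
The plan is to concentrate $\mu$ on a suitably layered $\sigma$-compact subset of $\ssS(k)$ and then, over each compact layer, to realise scalar data as limits of noncommutative polynomials in the $\bB$-transforms. For a complex polynomial $p$ in $2k$ noncommuting indeterminates and a tuple $X=(X_1,\dots,X_k)$ of matrices of common degree $n(X)$, write $p[\bB(X)]\df p(\bB(X_1),\dots,\bB(X_k),\bB(X_1)^*,\dots,\bB(X_k)^*)\in\mmM_{n(X)}$, the kind of expression occurring in \DEF{mdsud}. First I would reduce the measure. Since $\mu$ is $\sigma$-finite and each $\ssS_n(k)$ is Polish, $\mu|_{\ssS_n(k)}$ is a tight $\sigma$-finite Borel measure, so there are compact sets $C_{n,1}\subset C_{n,2}\subset\cdots\subset\ssS_n(k)$ with $\mu\bigl(\ssS_n(k)\setminus\bigcup_iC_{n,i}\bigr)=0$. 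Set $K^{(l)}\df\bigsqcup_{n=1}^{l}C_{n,l}$ and $K\df\bigcup_lK^{(l)}$. Because $C_{n,l}$ is compact and lies in the clopen summand $\ssS_n(k)$ of $\ssS(k)$, each $K^{(l)}$ is compact, $K^{(1)}\subset K^{(2)}\subset\cdots$, every member of $K^{(l)}$ has degree $\leqsl l$, and $K=\bigsqcup_n\bigcup_iC_{n,i}$ is $\mu$-conull. I claim $\ffF\df K$ is a measurable domain; as $K\cap\mmM_n^k=\bigcup_iC_{n,i}$ is $\sigma$-compact, hence Borel, the first requirement of \DEF{mdsud} holds, and the issue is the separating family together with its polynomials.

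The heart of the matter --- and the step I expect to be the main obstacle --- is the following. For fixed $l$, let $\mathscr C_l$ be the $C^*$-algebra of all continuous fields $X\mapsto s(X)\in\mmM_{n(X)}$ over the compact space $K^{(l)}$; since $K^{(l)}=\bigsqcup_{n\leqsl l}C_{n,l}$ is a finite disjoint union of clopen pieces, $\mathscr C_l\cong\bigoplus_{n\leqsl l}C(C_{n,l},\mmM_n)$. Let $\mathscr A_l\subset\mathscr C_l$ be the norm-closed $*$-subalgebra generated by the coordinate fields $X\mapsto\bB(X_j)$, $j=1,\dots,k$. I claim $\mathscr A_l=\mathscr C_l$. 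At each $X\in K^{(l)}$ the tuple $X$ is irreducible, so (Burnside's theorem; equivalently \LEM{irr} together with the bicommutant theorem) the $*$-algebra generated by $\bB(X_1),\dots,\bB(X_k)$ is the whole of $\mmM_{n(X)}$ --- indeed $|X_j|=|\bB(X_j)|\bigl(I-|\bB(X_j)|\bigr)^{-1}$ and $X_j=\bB(X_j)\bigl(I+|X_j|\bigr)$ both lie in the $C^*$-algebra generated by $\bB(X_j)$ --- so $\mathscr A_l$ is fibrewise onto. If $X\neq X'$ in $K^{(l)}$, then $X$ and $X'$ are unitarily inequivalent irreducible tuples (distinct points of $\ssS(k)$ represent distinct unitary-equivalence classes, by \PRO{select}), so Schur's lemma, applied to the $*$-representations of the free $*$-algebra on $k$ generators afforded by $X$ and $X'$, shows that $X\oplus X'$ has commutant $\CCC\oplus\CCC$ and hence generated $*$-algebra $\mmM_{n(X)}\oplus\mmM_{n(X')}$; consequently the joint evaluation $\mathscr A_l\to\mmM_{n(X)}\oplus\mmM_{n(X')}$ is onto. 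Thus $\mathscr A_l$ acts irreducibly on every fibre of $\mathscr C_l$ and separates its inequivalent irreducible representations; since the spectrum $K^{(l)}$ of $\mathscr C_l$ is Hausdorff and all irreducible representations of $\mathscr C_l$ are finite-dimensional, the Stone--Weierstrass theorem for such $C^*$-algebras forces $\mathscr A_l=\mathscr C_l$. In particular, for every $f\in C(K^{(l)})$ the field $X\mapsto f(X)\,I_{n(X)}$ is a uniform limit on $K^{(l)}$ of fields $X\mapsto p[\bB(X)]$.

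With this in hand the rest is routine. Fix a compatible metric $d$ on $\ssS(k)$ and take $\{F_1,F_2,\dots\}$ to be a countable base of open sets of $\ssS(k)$; a countable base separates the points of the metrizable space $\ssS(k)$, hence of $K$, which is the first half of the second requirement of \DEF{mdsud}. Fix $m$; for $i\geqsl1$ put $t_i(X)\df\min\bigl(1,\,i\,d(X,\ssS(k)\setminus F_m)\bigr)$, a continuous $[0,1]$-valued function on $\ssS(k)$ with $t_i\to\mathbf{1}_{F_m}$ pointwise. By the previous paragraph, choose for each $i$ a polynomial $p_i$ with $\bigl\|p_i[\bB(X)]-t_i(X)\,I_{n(X)}\bigr\|<1/i$ for all $X\in K^{(i)}$. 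Given $X\in K$, fix $l_0$ with $X\in K^{(l_0)}$; by nestedness $X\in K^{(i)}$ for all $i\geqsl l_0$, whence $p_i[\bB(X)]\to\mathbf{1}_{F_m}(X)\,I_{n(X)}$ as $i\to\infty$ --- that is, $p_i[\bB(X)]\to0$ if $X\in K\setminus F_m$ and $p_i[\bB(X)]\to I_{n(X)}$ if $X\in K\cap F_m$, precisely the condition demanded by \DEF{mdsud}. Therefore $K$ is a measurable domain carrying $\mu$, and the theorem follows. The only substantial ingredient is the Stone--Weierstrass step of the second paragraph, which should be cited (or reproved) in the form needed, namely for $C^*$-algebras that are finite direct sums of homogeneous ones; everything surrounding it is soft.
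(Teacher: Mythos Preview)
Your proof is correct and follows a genuinely different route from the paper's. Both share the same Stone--Weierstrass core (the paper's \LEM{s-w}, stated for a single degree $n$, is your claim $\mathscr{A}_l=\mathscr{C}_l$ restricted to one summand), but they diverge in how the compact pieces are assembled into a single measurable domain. The paper first passes to \emph{totally disconnected} compact sets $\kkK_{n,m}$ via \LEM{cantor}, so that characteristic functions of a clopen base are already continuous and \COR{tot_disc} makes each $\kkK_{n,m}$ a measurable domain outright; it then invokes the operator-theoretic machinery of \cite{pn1} (regularity measures, the relation $\sdisj$, and Lemmas~5.4.10 and~5.6.12 there) to show that the countably many disjoint pieces jointly support $\mu$ as a regularity measure. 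Your argument bypasses that external machinery entirely: you take \emph{nested} compacts $K^{(1)}\subset K^{(2)}\subset\cdots$ exhausting a conull $K$, separate by an ordinary open base $\{F_m\}$, and manufacture the required polynomial sequence by a diagonal trick --- approximate the continuous cutoffs $t_i\to\mathbf{1}_{F_m}$ to accuracy $1/i$ on $K^{(i)}$, so that every fixed $X\in K$ eventually lies in the domain of approximation. The payoff of your approach is a fully self-contained proof that never leaves elementary $C^*$-algebra and descriptive topology; the payoff of the paper's is that it plugs directly into the prime-decomposition framework of \cite{pn1} used throughout the rest of the article.
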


The proof of the above theorem shall be preceded by a few auxiliary results. The first of them is
a kind of folklore. Since we could not find it in the literature, we sketch its simple proof.

\begin{lem}{cantor}
If $\mu$ is a finite Borel measure on a separable complete metric space $X$ and $B$ is a Borel set
in $X$, then for every $\epsi > 0$ there is a totally disconnected compact set $K \subset B$ such
that $\mu(B \setminus K) \leqsl \epsi$.
\end{lem}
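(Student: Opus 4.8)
The plan is to reduce the statement to the classical fact that every finite Borel measure on a Polish space is inner regular with respect to compact sets, and then to improve the compact set to a totally disconnected one by ``cutting out'' a small neighbourhood of each of countably many hyperplanes. First I would observe that it suffices to treat the case where $\mu$ is finite (replace $\mu$ by an equivalent finite measure, or exhaust $B$ by sets of finite measure; both options are harmless since we only need one compact $K$). Also, since $X$ is separable and complete, $B$ is itself a separable metrizable space and, by standard inner regularity of Borel measures on Polish spaces, there is a compact $K_0 \subset B$ with $\mu(B \setminus K_0) \leqsl \epsi/2$; thus it is enough to find, inside an arbitrary compact metric space $K_0$, a totally disconnected compact subset $K$ with $\mu(K_0 \setminus K) \leqsl \epsi/2$.

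Next I would fix a countable dense subset $\{x_1, x_2, \ldots\}$ of $K_0$ (or work directly with a countable base). For each $i$ and each positive rational $r$, the boundary sphere $S_{i,r} = \{y : \varrho(y, x_i) = r\}$ has measure $\mu(S_{i,r}) \geqsl 0$, but for each fixed $i$ the spheres $S_{i,r}$ over $r$ in an interval are pairwise disjoint, so at most countably many of them have positive measure; hence for every $i$ the set of ``bad radii'' is countable, and we may choose, for each $i$, a sequence $r_{i,1} > r_{i,2} > \cdots \to 0$ of radii with $\mu(S_{i,r_{i,m}}) = 0$ and $r_{i,m} \leqsl 2^{-m}$. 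The family of closed balls $\{ \overline{B}(x_i, r_{i,m}) \}_{i,m}$ then consists of sets with $\mu$-null boundary, and, because $\{x_i\}$ is dense and $r_{i,m} \to 0$, for every $\delta > 0$ every point of $K_0$ lies in the interior of some ball of diameter $< \delta$ from this family. Using compactness, finitely many of these balls of diameter $< \delta$ cover $K_0$; disjointifying them in the usual way (replace the $t$-th ball by itself minus the union of the earlier ones) produces a finite partition of $K_0$ into Borel ``pieces'' of diameter $< \delta$ whose boundaries are $\mu$-null.

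Now I would carry out the standard Cantor-set construction. Apply the previous paragraph with $\delta = 1$ to get a finite cover of $K_0$ by Borel pieces of diameter $< 1$ with null boundaries; let $L_1$ be the union of their \emph{closures of interiors}, equivalently discard from $K_0$ a $\mu$-null set together with a small open ``fattening'' of the boundaries of total measure $\leqsl \epsi/4$, obtaining a compact set $L_1 \subset K_0$ which is a finite disjoint union of compact pieces each of diameter $< 1$ and with $\mu(K_0 \setminus L_1) \leqsl \epsi/4$. Repeat inside each piece of $L_1$ with $\delta = 1/2$, removing total measure $\leqsl \epsi/8$, to get $L_2 \subset L_1$ a finite disjoint union of compact pieces of diameter $< 1/2$, and so on, at stage $m$ removing measure $\leqsl \epsi 2^{-m-1}$. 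Put $K = \bigcap_m L_m$. Then $K$ is compact, $\mu(K_0 \setminus K) \leqsl \sum_m \epsi 2^{-m-1} \leqsl \epsi/2$, and $K$ is totally disconnected because any two of its points are separated, at some finite stage $m$ with $1/2^m$ smaller than their distance, into distinct relatively clopen pieces of $L_m \cap K$. Combined with the first reduction this gives $\mu(B \setminus K) \leqsl \epsi$, as required.

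The one delicate point — the ``main obstacle'' — is keeping the sets clopen \emph{and} compact simultaneously while controlling the measure: the pieces in the disjointified cover are only Borel, and their topological boundaries need not be empty, so one must pass to slightly shrunken compact pieces (closures of interiors, or pieces minus an open $\eta$-neighbourhood of the boundary) and verify that this shrinking costs only a prescribed small amount of measure — which is exactly why we arranged the covering balls to have $\mu$-null boundaries, so that such a neighbourhood of the boundary can be taken of arbitrarily small measure by outer regularity. Everything else is routine bookkeeping with the geometric series.
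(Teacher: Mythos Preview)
Your argument is correct, but the paper's proof is considerably simpler and sidesteps precisely the ``delicate point'' you flag. Instead of choosing balls with $\mu$-null boundaries and then shrinking away an open neighbourhood of those boundaries to force the pieces to be both compact and clopen, the paper proceeds directly: for each $s$, partition $B$ into countably many pairwise \emph{disjoint} Borel sets of diameter $< 1/s$, take finitely many of them carrying all but $\epsi/2^{s+1}$ of the measure, and then apply inner regularity to each piece separately to get compact subsets $K_1^{(s)},\ldots,K_{p_s}^{(s)}$. Because the ambient Borel pieces were already disjoint, the compact sets $K_n^{(s)}$ are automatically pairwise disjoint---and a finite family of pairwise disjoint compact sets is automatically clopen in its union, with no shrinking or boundary-measure considerations needed. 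Setting $K = \bigcap_s \bigcup_n K_n^{(s)}$ gives the result in one line. Your route via null-boundary balls and outer-regularity-based fattening works, but it is extra machinery solving a problem that the disjoint-partition-plus-inner-regularity trick never creates.
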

\begin{proof}
It is well-known that finite Borel measures on separable complete metric spaces are
\textit{regular}; that is, $\mu(C) = \sup \{\mu(L)\dd\ L \subset C,\ L \textup{ compact}\}$ for each
Borel set $C$. Since for any $\delta > 0$ the set $B$ may be covered by a countable number
of pairwise disjoint Borel sets whose diameters are less than $\delta$, we infer from the regularity
of $\mu$ that for any $s > 0$, there is a finite collection $\{K_1^{(s)},\ldots,K_{p_s}^{(s)}\}$
of pairwise disjoint compact subsets of $B$ whose diameters are less than $\frac1s$ such that
$\mu(B \setminus \bigcup_{n=1}^{p_s} K_n^{(s)}) \leqsl \frac{\epsi}{2^s}$. Then the set $K =
\bigcap_{s=1}^{\infty} \bigl(\bigcup_{n=1}^{p_s} K_n^{(s)}\bigr)$ is totally disconnected and
compact, and satisfies $\mu(B \setminus K) \leqsl \epsi$.
\end{proof}

The following result may be seen as a special case of the factorial Stone\hyp{}Weierstrass theorem
due to Longo \cite{lon} and Popa \cite{pop}.

\begin{lem}{s-w}
Let $\kkK$ be a compact subset of $\ssS(k) \cap \mmM_n^k$ for some $n > 0$. Then the $C^*$-algebra
$C(\kkK,\mmM_n)$ of all $\mmM_n$-valued continuous functions on $\kkK$ coincides with the smallest
unital $C^*$-subalgebra of $C(\kkK,\mmM_n)$ which contains the maps $b_j\dd \kkK \ni
(X_1,\ldots,X_k) \mapsto \bB(X_j) \in \mmM_n\ (j=1,\ldots,k)$.
\end{lem}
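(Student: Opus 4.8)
The plan is to apply the noncommutative Stone–Weierstrass theorem of Longo and Popa to the $C^*$-algebra $\aaA \subseteq C(\kkK,\mmM_n)$ generated (as a unital $C^*$-algebra) by the maps $b_1,\dots,b_k$, and to show that $\aaA$ separates the relevant primitive ideals and has full fibres; concretely, that $\aaA$ acts irreducibly on each fibre $\mmM_n$ over a point of $\kkK$ and separates the points of $\kkK$. Since $C(\kkK,\mmM_n)$ is a finite type~I $C^*$-algebra whose spectrum is $\kkK$ with every fibre of dimension $n$, the factorial (or primitive-ideal) Stone–Weierstrass theorem reduces the equality $\aaA = C(\kkK,\mmM_n)$ to exactly these two conditions, so the substance of the argument is verifying them.

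First I would verify irreducibility on each fibre. Fix a point $\xi = (X_1,\dots,X_k) \in \kkK \subseteq \ssS(k) \cap \mmM_n^k = \ssS_n(k)$. Evaluation at $\xi$ gives a unital $*$-homomorphism $\mathrm{ev}_\xi \colon \aaA \to \mmM_n$ whose image is the unital $C^*$-subalgebra of $\mmM_n$ generated by $\bB(X_1),\dots,\bB(X_k)$. Because $\bB$ is a Borel bijection onto $\{Y : \|Y\|<1\}$ with Borel inverse (indeed a homeomorphism on bounded pieces, and $\bB^{-1}(Y) = Y(I-|Y|)^{-1}$ in the bounded case — here all operators are matrices, so this is just a rational-functional-calculus identity), the matrices $X_1,\dots,X_k$ all lie in the $C^*$-algebra generated by $\bB(X_1),\dots,\bB(X_k)$ and $I$; hence $\mathrm{ev}_\xi(\aaA) \supseteq C^*(X_1,\dots,X_k,I)$, and in fact the two coincide. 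Now $(X_1,\dots,X_k) \in \iiI_n(k)$ is an irreducible tuple, so its commutant in $\mmM_n$ is $\CCC I$, which means $C^*(X_1,\dots,X_k,I) = \mmM_n$ by the bicommutant theorem (in finite dimensions, a $*$-subalgebra equals its double commutant). Thus $\mathrm{ev}_\xi(\aaA) = \mmM_n$: $\aaA$ acts irreducibly on every fibre.

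Next I would check that $\aaA$ separates the points of $\kkK$. This is immediate from the definition of $\ssS_n(k)$ via \PRO{select}: two distinct points of $\ssS_n(k)$ lie in distinct $\uuU_n$-orbits, hence the tuples $(X_1,\dots,X_k)$ and $(X_1',\dots,X_k')$ are not unitarily equivalent. If $\bB(X_j) = \bB(X_j')$ for all $j$ then, by the inversion formula above, $X_j = X_j'$ for all $j$, contradicting distinctness; so already the component functions $b_j$ separate points of $\kkK$ (no passage to the generated algebra is needed, but a fortiori $\aaA$ separates points). Combining the two conditions: the only point of the spectrum of $\aaA$ that fails to be separated or at which $\aaA$ acts reducibly — the hypotheses of the Longo–Popa theorem are met, so $\aaA = C(\kkK,\mmM_n)$, which is the assertion.

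**The main obstacle** I expect is not conceptual but a matter of invoking the correct form of the noncommutative Stone–Weierstrass theorem: the classical Stone–Weierstrass theorem does not apply (the range is $\mmM_n$, not $\CCC$), and the fully general noncommutative Stone–Weierstrass conjecture is false, so one must use a form valid in the present situation. The version needed is for a separable (or postliminal) $C^*$-algebra — here $C(\kkK,\mmM_n)$ is even finite type~I with Hausdorff spectrum — where Longo's and Popa's theorems, or the older result of Kaplansky/Glimm for $C^*$-algebras with Hausdorff primitive ideal space, guarantee that a $C^*$-subalgebra separating primitive ideals and with full fibres (equivalently, whose evaluations at points of the spectrum remain irreducible) is everything. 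Once the correct statement is pinned down, the two verifications above — irreducibility per fibre from $\ssS_n(k) \subseteq \iiI_n(k)$, and point-separation from $\bB$ being injective — are routine, so I would spend the proof text mainly on citing the right theorem and on the fibrewise bicommutant argument.
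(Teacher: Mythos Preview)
Your approach is the same as the paper's in outline: verify fibrewise irreducibility and a separation property, then invoke a noncommutative Stone--Weierstrass theorem. The fibrewise argument is fine and matches the paper's equation~\eqref{eqn:aux1}.

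There is, however, a genuine gap in your separation step. You establish that the maps $b_j$ take different values at distinct points of $\kkK$ (which follows from injectivity of $\bB$ alone), and you call this ``$\aaA$ separates points''. But for any of the theorems you name --- Longo--Popa, Kaplansky/Glimm, or Dixmier's 11.5.3 --- mere value-separation is \emph{not} the relevant hypothesis. What is needed is that the evaluation representations $\mathrm{ev}_\xi|_{\aaA}$ and $\mathrm{ev}_{\xi'}|_{\aaA}$ are \emph{inequivalent} (equivalently, that $\aaA$ separates the primitive ideals of $C(\kkK,\mmM_n)$, or the pure/factor states). These are different conditions: take $\kkK=\{0,1\}$, $n=2$, a non-scalar unitary $U$, and $\aaA=\{(A,UAU^{-1}):A\in\mmM_2\}$. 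This $\aaA$ has full fibres and contains elements with $a(0)\neq a(1)$, yet $\aaA\neq\mmM_2\times\mmM_2$.

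The fix is precisely the fact you mention but then set aside: distinct points of $\ssS_n(k)$ lie in distinct $\uuU_n$-orbits. If $\mathrm{ev}_\xi|_{\aaA}$ and $\mathrm{ev}_{\xi'}|_{\aaA}$ were equivalent, full fibres would produce $U\in\uuU_n$ with $b_j(\xi')=Ub_j(\xi)U^{-1}$ for all $j$; since $\bB$ is $\uuU_n$-equivariant this gives $X_j'=UX_jU^{-1}$, contradicting unitary inequivalence. The paper handles this point by explicitly exhibiting (via a polynomial in $\bB(X_j),\bB(X_j)^*$) an element $g\in\aaA$ with $g(\xi)=I$ and $g(\xi')=0$ --- a construction that uses irreducibility \emph{and} unitary inequivalence of the two tuples, not just their distinctness --- and then cites Dixmier's Corollary~11.5.3. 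Once you replace your injectivity argument with the inequivalence argument above, your proof is complete and essentially coincides with the paper's.
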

\begin{proof}
Denote by $\EEe$ a unital $C^*$-algebra generated by $\{b_1,\ldots,b_k\}$. Let $(X_1,\ldots,X_k)$
and $(Y_1,\ldots,Y_k)$ be two distinct members of $\kkK$. Since $(X_1,\ldots,X_k)$ is irreducible,
so is $(\bB(X_1),\ldots,\bB(X_k))$ and, consequently,
\begin{equation}\label{eqn:aux1}
\{f(X_1,\ldots,X_k)\dd\ f \in \EEe\} = \mmM_n.
\end{equation}
Furthermore, since
\begin{equation*}
\pi_n^k(\bB(X_1),\ldots,\bB(X_k)) \neq \pi_n^k(\bB(Y_1),\ldots,\bB(Y_k)),
\end{equation*}
the irreducibility of the $k$-tuples $(\bB(X_1),\ldots,\bB(X_k))$ and $(\bB(Y_1),\ldots,\bB(Y_k))$
is followed by the fact that there is a complex polynomial $p$ in $2k$ noncommuting variables such
that
\begin{gather*}
p(\bB(X_1),\ldots,\bB(X_k),\bB(X_1)^*,\ldots,\bB(X_k)^*) = I,\\
p(\bB(Y_1),\ldots,\bB(Y_k),\bB(Y_1)^*,\ldots,\bB(Y_k)^*) = 0
\end{gather*}
(cf. e.g. \cite{pn2}). So, for some $g \in \EEe$,
\begin{equation}\label{eqn:aux2}
g(X_1,\ldots,X_k) = I \qquad \textup{and} \qquad g(Y_1,\ldots,Y_k) = 0.
\end{equation}
Now a combination of \eqref{eqn:aux1} and \eqref{eqn:aux2} implies that $\EEe = C^*(\kkK,\mmM_n)$,
which follows directly from Proposition~2.3 in \cite{pn2} or Corollary~11.5.3 in \cite{dix}.
\end{proof}

As a consequence, we obtain

\begin{cor}{tot_disc}
Let $\kkK$ be a totally disconnected and compact subset of $\ssS(k) \cap \mmM_n^k$ for some $n > 0$.
Then $\kkK$ is a measurable domain.
\end{cor}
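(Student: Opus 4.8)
The plan is to verify the two defining conditions of \DEF{mdsud} directly for $\ffF = \kkK$. The Borel-ness condition is immediate: $\kkK \cap \mmM_m^k$ is empty for $m \neq n$ and equals the compact (hence Borel) set $\kkK$ for $m = n$. So the real task is to exhibit a countable point-separating family $\{F_1,F_2,\ldots\}$ of sets of $k$-tuples of matrices together with, for each index $m$, a sequence of complex polynomials in $2k$ noncommuting variables realising the convergence described in \DEF{mdsud}.

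First I would exploit the hypothesis that $\kkK$ is totally disconnected and compact. Since $\kkK$ is a subspace of the separable metrizable space $\ssS(k)$, it is second countable; being in addition compact, Hausdorff and totally disconnected, it is zero-dimensional and hence admits a \emph{countable} basis $\{F_1,F_2,\ldots\}$ consisting of clopen (= closed and open) subsets of $\kkK$. Clopen sets separate points of a totally disconnected compact space, so this family separates the points of $\kkK$; moreover each $F_m$, being closed in $\kkK$, is compact and therefore closed (and in particular Borel) in $\mmM_n^k$. This settles the first requirement.

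Next, fix $m$ and consider the function $\chi_m\dd \kkK \to \mmM_n$ that equals the identity matrix on $F_m$ and equals $0$ on $\kkK \setminus F_m$; since $F_m$ is clopen, $\chi_m$ is continuous, i.e. $\chi_m \in C(\kkK,\mmM_n)$. By \LEM{s-w}, $C(\kkK,\mmM_n)$ is the smallest unital $C^*$-algebra containing the maps $b_j\dd (X_1,\ldots,X_k) \mapsto \bB(X_j)$, and that algebra is the norm closure of the set of all elements of the form $p(b_1,\ldots,b_k,b_1^*,\ldots,b_k^*)$ with $p$ a complex polynomial in $2k$ noncommuting variables (the constant term of $p$ contributing the scalar multiples of the unit). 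Hence there are polynomials $p_1,p_2,\ldots$ with $p_j(b_1,\ldots,b_k,b_1^*,\ldots,b_k^*) \to \chi_m$ uniformly, and therefore pointwise, on $\kkK$. Evaluated at a point $(X_1,\ldots,X_k) \in \kkK$ this says that $p_j(\bB(X_1),\ldots,\bB(X_k),\bB(X_1)^*,\ldots,\bB(X_k)^*)$ tends to the identity matrix when $(X_1,\ldots,X_k) \in \kkK \cap F_m$ and to $0$ when $(X_1,\ldots,X_k) \in \kkK \setminus F_m$ --- exactly the convergence demanded in \DEF{mdsud}. Thus $\kkK$ is a measurable domain.

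I do not expect a genuine obstacle here, since the hard analytic input (\LEM{s-w}, itself a consequence of the factorial Stone--Weierstrass theorem) is already available; the only points needing care are routine bookkeeping: arranging that the separating family be chosen \emph{clopen} so that each $\chi_m$ is continuous and \LEM{s-w} applies, and observing that ``polynomial in $2k$ noncommuting variables'' already encompasses the constant term, so the generated unital $*$-algebra is precisely the set of such polynomial expressions in $b_1,\ldots,b_k,b_1^*,\ldots,b_k^*$.
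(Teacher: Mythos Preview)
Your proposal is correct and follows essentially the same route as the paper: choose a countable family of clopen subsets of $\kkK$ to serve as the separating collection, and use \LEM{s-w} to approximate the associated $\{I,0\}$-valued characteristic functions by noncommutative $*$-polynomials in $\bB(X_1),\ldots,\bB(X_k)$. The only cosmetic difference is that the paper takes the family of \emph{all} clopen subsets of $\kkK$ (countable since $\kkK$ is compact metrizable), whereas you take a countable clopen \emph{basis}; either choice works for exactly the reasons you give.
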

\begin{proof}
Let $\FfF$ be a collection of all clopen (that is, simultaneously open and closed) subsets
of $\kkK$. Then $\FfF$ is countable (because $\kkK$ is metrizable and compact), separates points
of $\kkK$ (by the total disconnectedness of $\kkK$) and, by \LEM{s-w}, for any $F \in \FfF$,
the function $j_F\dd \kkK \to \mmM_n$ given by
\begin{equation*}
j_F(X_1,\ldots,X_k) = \begin{cases}I & (X_1,\ldots,X_k) \in F,\\
0 & (X_1,\ldots,X_k) \notin F\end{cases}
\end{equation*}
belongs to the smallest unital $C^*$-algebra which contains the maps $b_1,\ldots,b_k$ introduced
in \LEM{s-w}. Consequently, for any $F \in \FfF$, there is a sequence $p_1,p_2,\ldots$
of complex polynomials in $2k$ noncommuting variables such that
\begin{equation*}
\lim_{n\to\infty} p_n(\bB(X_1),\ldots,\bB(X_k),\bB(X_1)^*,\ldots,\bB(X_k)^*) = j_F(X_1,\ldots,X_k)
\end{equation*}
for any $(X_1,\ldots,X_k) \in \kkK$ (and the convergence is uniform on $\kkK$). Now the very
definition of a measurable domain yields the assertion.
\end{proof}

\begin{proof}[Proof of \THM{meas}]
For the purpose of this proof, we follow some concepts of \cite{pn1}. In particular:
\begin{itemize}
\item we call a $\sigma$-finite Borel measure on $\ssS(k)$ a \textit{regularity} measure if it is
 supported on a measurable domain;
\item for two regularity measures $\mu$ and $\nu$ we write $\mu \sdisj \nu$ if $\mu$ and $\nu$ are
 mutually singular and $\mu+\nu$ is a regularity measure as well;
\item two $k$-tuples $(T_1,\ldots,T_k)$ and $(T_1',\ldots,T_k')$ of operators are \textit{unitarily
 disjoint}, in symbols $(T_1,\ldots,T_k) \disj (T_1',\ldots,T_k')$, if no nontrivial part
 of $(T_1,\ldots,T_k)$ is unitarily equivalent to a part of $(T_1',\ldots,T_k')$.
\end{itemize}
So, our aim is to show that in the above context the property of being a regularity measure is
automatic.\par
Let $\mu$ be a $\sigma$-finite Borel measure on $\ssS(k)$. We may and do assume that $\mu$ is
finite. It follows from \LEM{cantor} that there is a collection $\{\kkK_{n,m}\dd\ n,m > 0\}$
of mutually disjoint totally disconnected compact sets such that for any $n > 0$:
\begin{itemize}
\item $\kkK_{n,m} \subset \ssS(k) \cap \mmM_n^k$ for any $m > 0$;
\item $\mu(\ssS(k) \cap \mmM_n^k \setminus \bigcup_{m=1}^{\infty} \kkK_{n,m}) = 0$.
\end{itemize}
For simplicity, let $\mu_{n,k}$ be a Borel measure given by $\mu_{n,k}(\bbB) = \mu(\bbB \cap
\kkK_{n,m})$. It follows from \COR{tot_disc} that $\mu_{n,k}$ is a regularity measure. We shall now
show that
\begin{equation}\label{eqn:sdisj}
\mu_{n,m} \sdisj \mu_{n',m'}
\end{equation}
for any two distinct pairs $(n,m)$ and $(n',m')$. To this end, we will separately deal with
the cases when $n = n'$ and $n \neq n'$. In the former case, \eqref{eqn:sdisj} follows immediately
from \COR{tot_disc} (because $K_{n,m} \cup K_{n,m'}$ is totally disconnected). Now assume $n \neq
n'$ and consider $k$-tuples $(T_1,\ldots,T_k)$ and $(T_1',\ldots,T_k')$ with $T_j \df
\int^{\oplus}_{\kkK} X_j \dint{\mu_{n,m}(X_1,\ldots,X_k)}$ and $T_j' \df \int^{\oplus}_{\kkK} X_j
\dint{\mu_{n,m'}(X_1,\ldots,X_k)}$. In other words (following \cite{pn1}),
\begin{equation}\label{eqn:p_deco}
(T_1,\ldots,T_k) = \int^{\oplus}_{\kkK} \id \dint{\mu_{n,m}} \qquad \textup{and} \qquad
(T_1',\ldots,T_k') = \int^{\oplus}_{\kkK} \id \dint{\mu_{n,m'}}
\end{equation}
(where `$\id$' stands for the identity map). Since both $\mu_{n,m}$ and $\mu_{n,m'}$ are regularity
measures supported on, respectively, $\ssS(k) \cap \mmM_n^k$ and $\ssS(k) \cap \mmM_{n'}^k$,
it follows e.g. from Corollary~5.6.7 in \cite{pn1} that $\WwW(T_1,\ldots,T_k)$ and
$\WwW(T_1',\ldots,T_k')$ are, respectively, type~$\tI_n$ and $\tI_{n'}$ von Neumann algebras
(because \eqref{eqn:p_deco} are \textit{prime decompositions}; consult \cite{pn1} or Theorem~4.6
in Section~4 and the notes following it). This implies that $(T_1,\ldots,T_k) \disj
(T_1',\ldots,T_k')$ and thus \eqref{eqn:sdisj} holds, thanks to Lemma~5.4.10 in \cite{pn1}.\par
Having \eqref{eqn:sdisj}, one easily deduces from Lemma~5.6.12 (still in \cite{pn1}) that $\mu$,
which coincides with $\sum_{n=1}^{\infty} \sum_{m=1}^{\infty} \mu_{n,m}$, is a regularity measure.
\end{proof}

\section{Prime decomposition in finite type~I algebras}

Let $A$ and $m$ be an $n \times n$ matrix and a positive integer, respectively. We denote by $m
\odot A$ the inflation of $A$ composed of $m$ copies of $A$. That is,
\begin{equation*}
m \odot A = \begin{pmatrix}A & 0 & \cdots & 0\\
 0 & A & \cdots & 0\\
 \vdots & \vdots & \ddots & \vdots\\
 0 & 0 & \cdots & A\end{pmatrix}
\end{equation*}
is a square matrix of degree $mn$. Additionally, for simplicity, we will write $\infty \odot A$
to denote the inflation of $A$ (acting on $\ell_2$) composed of countable infinite number of copies
of $A$. In a similar manner we denote inflations of tuples of operators. We shall say a field
$\{(X_1(\omega),\ldots,X_k(\omega))\}_{\omega\in\Omega}$ of $k$-tuples is \textit{measurable}
if each of the fields $\{X_s(\omega)\}_{\omega\in\Omega}\ (s=1,\ldots,k)$ of operators is measurable
(cf. \cite{pn1}).\par
It is an easy exercise and a well-known fact that if $\{X(\omega)\}_{\omega\in\Omega}$ and $f\dd
\Omega \to \{1,2,\ldots,\infty\}$ are, respectively, a measurable field of matrices and a measurable
function, then the field $\{f(\omega) \odot X(\omega)\}_{\omega\in\Omega}$ is measurable as well.
What we want to prove is the following

\begin{thm}{pr_deco}
For $j \in \{1,2\}$, let $\{(X_1^{(j)}(\omega_j),\ldots,
X_k^{(j)}(\omega_j))\}_{\omega_j\in\Omega_j}$ and $f_j\dd \Omega_j \to \{1,2,\ldots,\infty\}$ be,
respectively, a measurable field of $k$-tuples of matrices and a measurable function defined
on a standard measure space $(\Omega_j,\Mm_j,\mu_j)$ such that:
\begin{enumerate}[\upshape(F1)]
\item for any $\omega_j \in \Omega_j$, the $k$-tuple
 $(X_1^{(j)}(\omega_j),\ldots,X_k^{(j)}(\omega_j))$ is irreducible;
\item for any two distinct points $\omega_j$ and $\omega_j'$ of $\Omega_j$, the $k$-tuples
 \begin{equation*}
 (X_1^{(j)}(\omega_j),\ldots,X_k^{(j)}(\omega_j)) \qquad \textup{and} \qquad
 (X_1^{(j)}(\omega_j'),\ldots,X_k^{(j)}(\omega_j'))
 \end{equation*}
 are unitarily inequivalent.
\end{enumerate}
Then the $k$-tuples
\begin{equation*}
\tTT_1 \df \Bigl(\int^{\oplus}_{\Omega_1} f_1(\omega_1) \odot X_1^{(1)}(\omega_1)
\dint{\mu_1(\omega_1)},\ldots,\int^{\oplus}_{\Omega_1} f_1(\omega_1) \odot X_k^{(1)}(\omega_1)
\dint{\mu_1(\omega_1)}\Bigr)
\end{equation*}
and
\begin{equation*}
\tTT_2 \df \Bigl(\int^{\oplus}_{\Omega_2} f_2(\omega_2) \odot X_1^{(2)}(\omega_2)
\dint{\mu_2(\omega_2)},\ldots,\int^{\oplus}_{\Omega_2} f_2(\omega_2) \odot X_k^{(2)}(\omega_2)
\dint{\mu_2(\omega_2)}\Bigr)
\end{equation*}
are unitarily equivalent iff there exist measurable sets $Z_1 \subset \Omega_1$, $Z_2 \subset
\Omega_2$ and a Borel isomorphism $\Phi\dd \Omega_1 \setminus Z_1 \to \Omega_2 \setminus Z_2$ such
that:
\begin{enumerate}[\upshape(E1)]
\item $\mu_1(Z_1) = 0$ and $\mu_2(Z_2) = 0$;
\item for each measurable set $B \subset \Omega_1 \setminus Z_1$, $\mu_2(\Phi(B)) = 0$ if and only
 if $\mu_1(B) = 0$;
\item the $k$-tuples $(X_1^{(2)}(\Phi(\omega_1)),\ldots,X_k^{(2)}(\Phi(\omega_1)))$ and
 $(X_1^{(1)}(\omega_1),\ldots,X_k^{(1)}(\omega_1))$ are unitarily equivalent and
 $f_2(\Phi(\omega_1)) = f_1(\omega_1)$ for any $\omega_1 \in \Omega_1 \setminus Z_1$.
\end{enumerate}
\end{thm}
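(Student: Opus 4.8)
The plan is to transport the whole situation to the universal selection space $\ssS(k)=\bigsqcup_{n=1}^{\infty}\ssS_n(k)$ of \PRO{select} and \THM{meas}, where the ambiguity of the central decomposition discussed in the Introduction disappears, and then to invoke the uniqueness part of the Prime Decomposition Theorem of \cite{pn1}. The first (and purely measure-theoretic) step is a reduction to \emph{standard form}; fix $j\in\{1,2\}$ and write $X_\bullet^{(j)}(\omega_j)$ for the $k$-tuple $(X_1^{(j)}(\omega_j),\ldots,X_k^{(j)}(\omega_j))$. The matrix size is a measurable function on $\Omega_j$, so, working separately on each piece on which the fibres are $n\times n$ and using that $\pi_n^k$ restricts to a Borel isomorphism $\ssS_n(k)\to\Ii_n(k)$, one obtains a Borel map $\phi_j\dd\Omega_j\to\ssS(k)$ sending $\omega_j$ to the unique representative in $\ssS(k)$ of the orbit of $X_\bullet^{(j)}(\omega_j)$. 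By (F2) this $\phi_j$ is injective, hence (a Borel injection of a standard Borel space has Borel range and Borel inverse) a Borel isomorphism of $\Omega_j$ onto a Borel set $S_j\subset\ssS(k)$; put $\nu_j\df(\phi_j)_*\mu_j$ and $\tilde f_j\df f_j\circ\phi_j^{-1}\dd S_j\to\{1,2,\ldots,\infty\}$. A measurable-selection argument — the set of pairs $(\omega_j,U)\in\Omega_j\times\uuU_n$ (on the size-$n$ piece) with $U.X_\bullet^{(j)}(\omega_j)=\phi_j(\omega_j)$ is Borel with non-empty compact sections — yields a Borel unitary field implementing fibrewise the equivalence of $X_\bullet^{(j)}(\omega_j)$ with $\phi_j(\omega_j)$; tensoring it with $I_{f_j(\omega_j)}$ and using $\int^{\oplus}(m\odot A(\omega))\dint{\mu}\cong m\odot\int^{\oplus}A(\omega)\dint{\mu}$ gives
\[
\tTT_j\ \cong\ \int^{\oplus}_{S_j}\tilde f_j\odot\id\dint{\nu_j}.
\]
By \THM{meas} the measure $\nu_j$ is supported on a measurable domain, so (after discarding a $\nu_j$-null set) the right-hand side with $\tilde f_j$ deleted is a prime decomposition of a tuple generating a type~$\tI$ von Neumann algebra, and the displayed integral is its canonical $\tilde f_j$-inflation; in particular it is the prime decomposition of $\tTT_j$ in the sense of \cite{pn1}.

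For the implication ``$\Longleftarrow$'' one runs this construction in reverse. Since $\mu_i(Z_i)=0$, removing $Z_i$ changes neither $\tTT_i$ up to unitary equivalence, so we may assume $Z_1=Z_2=\varnothing$ and that $\Phi\dd\Omega_1\to\Omega_2$ is a Borel isomorphism carrying the $\sigma$-ideal of $\mu_1$-null sets onto that of $\mu_2$-null sets. The change of variables $\Phi$ identifies $\tTT_1$ with the direct integral of $\omega_2\mapsto f_1(\Phi^{-1}(\omega_2))\odot X_\bullet^{(1)}(\Phi^{-1}(\omega_2))$ against $(\Phi)_*\mu_1$; by (E3) and the same measurable-selection argument there is a Borel unitary field $V$ with $V(\omega_1).X_\bullet^{(1)}(\omega_1)=X_\bullet^{(2)}(\Phi(\omega_1))$, and since $f_2(\Phi(\omega_1))=f_1(\omega_1)$ the operators $V(\omega_1)\otimes I_{f_1(\omega_1)}$ turn this integral into the one defining $\tTT_2$ but over the measure $(\Phi)_*\mu_1$; finally, as $(\Phi)_*\mu_1\equiv\mu_2$ by (E2), multiplication by $\sqrt{d((\Phi)_*\mu_1)/d\mu_2}$ over the base corrects the measure. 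This is routine direct-integral bookkeeping.

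The content is in ``$\Longrightarrow$''. Assume $\tTT_1\cong\tTT_2$. By the first paragraph the inflated identity integrals over $(S_1,\nu_1,\tilde f_1)$ and $(S_2,\nu_2,\tilde f_2)$ are unitarily equivalent prime decompositions, so the uniqueness part of the Prime Decomposition Theorem of \cite{pn1} (in its multiplicity-sensitive, covariant form) supplies a Borel isomorphism $\Psi$ between co-null Borel subsets of $S_1$ and $S_2$ carrying $\nu_1$ to a measure equivalent to $\nu_2$, preserving the multiplicities ($\tilde f_2\circ\Psi=\tilde f_1$), and identifying only unitarily equivalent fibres; but the fibre over a point $s\in S_j\subset\ssS(k)$ \emph{is} the tuple $s$, and distinct points of $\ssS(k)$ are never unitarily equivalent, so $\Psi$ is the identity on a co-null set. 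Hence $S_1$ and $S_2$ coincide off a null set, $\nu_1\equiv\nu_2$, and $\tilde f_1=\tilde f_2$ $\nu_1$-a.e. Transporting back, $\Phi\df\phi_2^{-1}\circ\phi_1$, defined off the $\mu_1$-null set $\phi_1^{-1}(\ssS(k)\setminus S_2)$ and onto a set whose complement is $\mu_2$-null, is a Borel isomorphism satisfying (E1)--(E3): (E1)--(E2) follow by transitivity of the matchings of $\mu_1,\nu_1,\nu_2,\mu_2$, while (E3) holds because $\phi_1(\omega_1)$ and $\phi_2(\Phi(\omega_1))$ are the \emph{same} point of $\ssS(k)$, whence $X_\bullet^{(2)}(\Phi(\omega_1))\sim\phi_2(\Phi(\omega_1))=\phi_1(\omega_1)\sim X_\bullet^{(1)}(\omega_1)$ and $f_2(\Phi(\omega_1))=\tilde f_2(\phi_1(\omega_1))=\tilde f_1(\phi_1(\omega_1))=f_1(\omega_1)$.

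I expect the main obstacle to be this last invocation of uniqueness: one has to match the hypotheses of the Prime Decomposition Theorem of \cite{pn1} precisely — in particular to check that the measurable-domain support furnished by \THM{meas} really makes $\int^{\oplus}_{S_j}\id\dint{\nu_j}$ a bona fide prime decomposition, and that the $m\odot(-)$ bookkeeping is compatible with the covariant (multiplicity) version of that theorem — and then to extract from it the sharp ``identity on $\ssS(k)$'' conclusion rather than merely an abstract isomorphism of measure spaces. The measurable selections of unitary fields (Kuratowski--Ryll-Nardzewski, or the compact-section selection theorem) and the Radon--Nikodym twist of the base measure are standard and should cause no trouble.
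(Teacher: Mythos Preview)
Your proposal is correct and follows essentially the same route as the paper: transport $\Omega_j$ into $\ssS(k)$ via the canonical selection $\varphi_j$, invoke \THM{meas} to get a measurable-domain support (hence regularity of the field in the sense of \cite{pn1}), and then appeal to the uniqueness machinery of \cite{pn1} to conclude that the transported measures match up to null sets and the multiplicities agree. The only cosmetic difference is bookkeeping: the paper bundles the multiplicity into a single map $\Phi_j\dd\Omega_j\to\Ff$ sending $\omega_j$ to the unitary equivalence class of the \emph{inflated} tuple $f_j(\omega_j)\odot X_\bullet^{(j)}(\omega_j)$ and cites Corollary~5.4.7 of \cite{pn1} directly for mutual absolute continuity of the pushforwards, whereas you keep the irreducible part $\phi_j$ and the multiplicity $\tilde f_j$ separate and invoke the full Prime Decomposition Theorem, then observe that on $\ssS(k)$ the resulting Borel isomorphism must be the identity.
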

\begin{proof}
The `if' part is easy and well-known (and holds in more general settings; see e.g. items
(di5)--(di6) on page~73 in \cite{pn1}). Here we shall focus only on the `only if' part. So, assume
$\tTT_1$ and $\tTT_2$ are unitarily equivalent. Below we preserve the settings and notation
of the previous section. Further, let $\iiI(k)$ denote $\bigcup_{n=1}^{\infty} \iiI_n(k)$ and
$\psi\dd \iiI(k) \to \ssS(k)$ be a function given by the rule: $\psi(A_1,\ldots,A_k)$ is a unique
$k$-tuple in $\ssS(k)$ which is unitarily equivalent to $(A_1,\ldots,A_k)$. Then $\psi$ is a Borel
function. Now for $j \in \{1,2\}$, let $\varphi_j\dd \Omega_j \to \ssS(k)$ be given
by $\varphi_j(\omega_j) = \psi(X_1^{(j)}(\omega_j),\ldots,X_k^{(j)}(\omega_j))$. We see that
$\varphi_j$ is measurable. What is more, it is one-to-one, by (F2). Since $(\Omega_j,\Mm_j,\mu_j)$
is a standard Borel space, we conclude from \THM{meas} (involving e.g. the transport of $\mu_j$
under $\varphi_j$) that there is a measurable set $Z_j' \subset \Omega_j$ such that $\mu_j(Z_j') =
0$ and $\varphi_j(\Omega_j \setminus Z_j')$ is a measurable domain. This implies that the set
$\{(X_1^{(j)}(\omega_j),\ldots,X_k^{(j)}(\omega_j))\dd\ \omega_j \in \Omega_j \setminus Z_j'\}$ is
a measurable domain as well. This means that the field
$\{(X_1^{(j)}(\omega_j),\ldots,X_k^{(j)}(\omega_j))\}_{\omega_j\in\Omega_j}$ is \textit{regular}
in the sense of \cite{pn1} (cf. Proposition~5.4.4 there) and thus also
\begin{equation}\label{eqn:reg}
\{(f_j(\omega_j) \odot X_1^{(j)}(\omega_j),\ldots,f_j(\omega_j) \odot
X_k^{(j)}(\omega_j))\}_{\omega_j\in\Omega_j} \textup{ is a regular field}
\end{equation}
(see Lemma~5.4.8 in \cite{pn1}). To make further arguments more transparent, we need to recall some
concepts introduced in \cite{pn1}. We consider the set $\Ff$ of all unitary equivalence classes
of (all) $k$-tuples of separable Hilbert space operators, equipped with a quotient Borel structure,
as described in \cite{pn1}. For simplicity, we denote by $\Phi_j\dd \Omega_j \to \Ff$ a (measurable)
function defined by the rule: $\Phi_j(\omega_j)$ is the unitary equivalence class of $(f_j(\omega_j)
\odot X_1^{(j)}(\omega_j),\ldots,f_j(\omega_j) \odot X_k^{(j)}(\omega_j))$. It follows from
\eqref{eqn:reg} and the choice of $Z_j'$ that the set $\Phi_j(\Omega_j \setminus Z_j')$ is Borel
in $\Ff$ and the restriction of $\Phi_j$ to $\Omega_j \setminus Z_j'$ is a Borel isomorphism; and
from item (F2) that $\Phi_j$ is one-to-one (on the whole $\Omega_j$). Now let $\lambda_j$ be
the transport of $\mu_j$ under $\Phi_j$; that is, $\lambda_j(\Gg) = \mu_j(\Phi_j^{-1}(\Gg))$ for any
Borel set $\Gg \subset \Ff$. Since $\tTT_1$ and $\tTT_2$ are unitarily equivalent, we infer from
Corollary~5.4.7 in \cite{pn1} that the measures $\lambda_1$ and $\lambda_2$ are mutually absolutely
continuous. So, the intersection $\Ff_0$ of $\Phi_1(\Omega_1 \setminus Z_1')$ and $\Phi_2(\Omega_2
\setminus Z_2')$ is of full measure with respect to both $\lambda_1$ and $\lambda_2$. It suffices
to put $Z_j = \Omega_j \setminus \Phi_j^{-1}(\Ff_0) (\supset Z_j')$ and define $\Phi\dd \Omega_1
\setminus Z_1 \to \Omega_2 \setminus Z_2$ as $(\Phi_2\bigr|_{\Omega_2 \setminus Z_2})^{-1} \circ
\Phi_1$. It is readily seen that $\Phi$ is a Borel isomorphism and conditions (E1)--(E2) are
fulfilled. Finally, observe that for any $\omega_1 \in \Omega_1 \setminus Z_1$,
$\Phi_2(\Phi(\omega_1)) = \Phi_1(\omega_1)$ which means that the $k$-tuples $(f_2(\Phi(\omega_1))
\odot X_1^{(2)}(\Phi(\omega_1)),\ldots,f_2(\Phi(\omega_1)) \odot X_k^{(2)}(\Phi(\omega_1)))$ and
$(f_1(\omega_1) \odot X_1^{(1)}(\omega_1),\ldots,f_1(\omega_1) \odot X_k^{(1)}(\omega_1))$ are
unitarily equivalent. So, an application of (F1) yields that (E3) holds and we are done.
\end{proof}

The proof of \THM{0} depends on the property established below. The next result is almost certainly
known, but we could not find it in the literature. (A similar, but slightly different, property was
established by Ernest in Lemma~4.3 in \cite{ern}.) For reader's convenience, we give its proof.

\begin{lem}{infl}
If $\{(T_1(\omega),\ldots,T_k(\omega))\}_{\omega\in\Omega}$ is a measurable field defined
on a standard Borel space $(\Omega,\Mm)$ such that the $k$-tuples $(T_1(\omega),\ldots,T_k(\omega))$
are unitarily equivalent to inflations of irreducible $k$-tuples of matrices, then there exist
a measurable function $f\dd \Omega \to \{1,2,\ldots,\infty\}$ and a measurable field
$\{(A_1(\omega),\ldots,A_k(\omega))\}_{\omega\in\Omega}$ of irreducible $k$-tuples of matrices
such that the $k$-tuples $(T_1(\omega),\ldots,T_k(\omega))$ and $f(\omega) \odot (A_1(\omega),
\ldots,A_k(\omega))$ are unitarily equivalent for each $\omega \in \Omega$.
\end{lem}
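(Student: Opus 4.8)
The plan is to reduce the decomposition of each $k$-tuple $(T_1(\omega),\ldots,T_k(\omega))$ to a measurable choice of (i) the multiplicity $f(\omega)$ and (ii) a representative irreducible matrix tuple $(A_1(\omega),\ldots,A_k(\omega))$ lying in the canonical selector $\ssS(k)$. First I would recall that by hypothesis each $(T_1(\omega),\ldots,T_k(\omega))$ is unitarily equivalent to $m \odot (B_1,\ldots,B_k)$ for some $n > 0$, some $m \in \{1,2,\ldots,\infty\}$, and some $(B_1,\ldots,B_k) \in \iiI_n(k)$; moreover the pair $(n,m)$ and the unitary orbit of $(B_1,\ldots,B_k)$ are uniquely determined (this is just the uniqueness of the type-$\tI$ decomposition of a matrix tuple, or for $m = \infty$ of a separable inflation). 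So on the level of sets there is no ambiguity in how to define $f$ and the orbit of $(A_1(\omega),\ldots,A_k(\omega))$; the entire content of the lemma is \emph{measurability} of these assignments.

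The key steps, in order, would be: (1) Partition $\Omega$ into the measurable sets $\Omega_{n} = \{\omega\dd \dim \HHh_n(\omega) \text{ is consistent with degree-}n\text{ irreducible blocks}\}$ — more precisely, using the rank of the commutant or, more concretely, the fact that $\WwW'(T_1(\omega),\ldots,T_k(\omega))$ is a type-$\tI_n$ factor-like algebra exactly on a Borel set; one extracts $n(\omega)$ as a Borel function since the field of von Neumann algebras $\{\WwW'(T_1(\omega),\ldots,T_k(\omega))\}$ is measurable (this is standard reduction theory, cf. \cite{pn1}). (2) On each piece where $n(\omega) = n$ is constant, the Hilbert space fibre has dimension $n \cdot f(\omega)$, so $f(\omega) = \dim\HHh(\omega)/n$ is measurable (the fibre dimension of a measurable field is a Borel function). (3) Finally, realize the representative matrix tuple: on the set where $n(\omega) = n$ and $f(\omega) = m$ is finite, one can (after a measurable unitary straightening of the field onto the constant Hilbert space $\CCC^{nm}$, which exists by the standard measurable-unitary-section theorem) write each $T_s(\omega)$ as an $m \times m$ block matrix over $\mmM_n$; restricting to, say, the $(1,1)$ corner after diagonalizing the commutant gives a measurable field of irreducible tuples in $\iiI_n(k)$, and composing with the Borel map $\psi\dd \iiI(k) \to \ssS(k)$ from the proof of \THM{pr_deco} (sending a tuple to the unique unitarily equivalent member of $\ssS(k)$) yields the desired measurable field $\{(A_1(\omega),\ldots,A_k(\omega))\}$. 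The case $f(\omega) = \infty$ is handled analogously, pulling back a countable measurable frame in the fibre and using that the compression to a chosen $n$-dimensional summand of the inflation is again measurable.

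The main obstacle I expect is step (3): producing a \emph{measurable} selection of an irreducible summand from a field of inflations. The subtlety is that "the $(1,1)$ corner" is only well-defined after one has fixed a measurable identification of each fibre with $\CCC^{n}\otimes\CCC^{m(\omega)}$ compatible with the tensor splitting induced by the inflation structure — equivalently, a measurable choice of a minimal projection in the type-$\tI_n$ commutant $\WwW'(T_1(\omega),\ldots,T_k(\omega))$. This is exactly the kind of measurable selection guaranteed by the von Neumann reduction theory (measurable fields of von Neumann algebras admit measurable fields of minimal projections; cf. the selection apparatus already invoked via \cite{k-m} and \PRO{select}), so the argument goes through, but it must be set up carefully on each piece $\{n(\omega) = n\}$ separately and the finite and infinite multiplicity cases treated in parallel. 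Once the measurable minimal projection $e(\omega)$ is in hand, $(A_1(\omega),\ldots,A_k(\omega)) \df e(\omega)(T_1(\omega),\ldots,T_k(\omega))e(\omega)$ restricted to the range of $e(\omega)$ (transported measurably to $\ssS(k)$ via $\psi$) and $f(\omega) \df \dim\HHh(\omega)/n(\omega)$ complete the construction, and the claimed unitary equivalence $(T_1(\omega),\ldots,T_k(\omega)) \cong f(\omega)\odot(A_1(\omega),\ldots,A_k(\omega))$ holds fibrewise by the uniqueness of the type-$\tI$ block decomposition.
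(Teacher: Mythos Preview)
Your outline is essentially correct, but it takes a different route from the paper. The paper does \emph{not} analyse the commutant field or select minimal projections. Instead it partitions $\Omega$ according to the \emph{fibre dimension} $n$ (not the block degree), identifies each fibre with a fixed $\HHh_n$, and then builds a single injective Borel map
\[
h_n\dd \bigcup_{p\in J(n)} \EEe_p \times \ssS_p(k) \longrightarrow \BBb(\HHh_n)^k,\qquad
h_n(U,A)=U\cdot\bigl(\tfrac{n}{p}\odot A\bigr)\cdot U^{-1},
\]
where $J(n)=\{p:\ n/p\in\{1,2,\ldots,\infty\}\}$, $\ssS_p(k)$ is the Borel selector from \PRO{select}, and $\EEe_p$ is a Borel transversal for the left cosets of the stabiliser subgroup $\UUu_p$ in the unitary group of $\HHh_n$, obtained from Dixmier's coset-section theorem. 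Injectivity of $h_n$ (forced by the uniqueness of the irreducible block together with the choice of $\ssS_p(k)$ and $\EEe_p$) makes $h_n$ a Borel isomorphism onto its image, and measurability of $\omega\mapsto(A_1(\omega),\ldots,A_k(\omega))$ and of $f(\omega)=n/p$ drops out by composing the given field with $h_n^{-1}$.

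The trade-off: the paper's argument packages all the selection difficulties into a single appeal to the Lusin--Souslin ``injective Borel image'' theorem plus Dixmier's coset section, and never touches measurable fields of von Neumann algebras. Your approach is more constructive---you actually exhibit the irreducible summand as a corner $e(\omega)T(\omega)e(\omega)$---but the price is that you must justify (i) Borel measurability of $\omega\mapsto n(\omega)$ via the commutant and (ii) a measurable choice of minimal projection in $\WwW'(T_1(\omega),\ldots,T_k(\omega))$, both of which are true but require invoking the reduction-theory machinery for measurable fields of von Neumann algebras rather than the more elementary descriptive-set-theoretic tools the paper uses.
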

\begin{proof}
The set $\Omega$ may be divided into countably many measurable parts, say $\Omega_1,\Omega_2,\ldots,
\Omega_{\infty}$, such that $T_s(\omega)\ (s=1,\ldots,k)$ acts on a separable Hilbert space $\HHh_n$
of dimension $n$ for any $\omega \in \Omega_n$. Further, we may and do assume that $\HHh_n = \CCC^n$
for finite $n$ and $\HHh_{\infty} = \ell_2$. We now fix $n \in \{1,2,\ldots,\infty\}$ and put $J(n)
\df \{p \in \{1,2,\ldots\}\dd\ n/p \in \{1,2,\ldots,\infty\}\}$. Everywhere below the algebra
$\BBb(\HHh_n)$ of all bounded operators on $\HHh_n$ is equipped with the strong operator topology.
Further, let $\ssS_1(k),\ssS_2(k),\ldots$ be sets obtained from \PRO{select}. For any $p \in J(n)$
we use $\UUu_p$ to denote the set of all unitary operators on $\HHh_n$ which commute with any
operator of the form $\frac{n}{p} \odot X_s$ for $s \in \{1,\ldots,k\}$ where $(X_1,\ldots,X_k) \in
\ssS_p(k)$. Then $\UUu_p$ is a closed subgroup of the unitary group of $\HHh_n$. By a theorem
of Dixmier (see e.g. Theorem~12.17 in Chapter~II of \cite{kec} or Theorem~1.2.4 in \cite{b-k}),
there exists a Borel set $\EEe_p$ of unitary operators on $\HHh_n$ which meets every left coset
of $\UUu_p$ (in the whole unitary group) at exactly one point. Since $\ssS_p(k)$ consists
of mutually unitarily inequivalent irreducible $k$-tuples, we conclude that the function $h_n\dd
\bigcup_{p \in J(n)} \EEe_p \times \ssS_p(k) \to \BBb(\HHh_n)^k$ (where $\BBb(\HHh_n)$ is
the algebra of all bounded operators on $\HHh_n$ equipped with the strong operator topology) given
by $h_n(U,(A_1,\ldots,A_k)) = (U (\frac{n}{p} \odot A_1) U^{-1},\ldots,U (\frac{n}{p} \odot A_k)
U^{-1})$ (for $U \in \EEe_p$, $(A_1,\ldots,A_k) \in \ssS_p(k)$) is one-to-one. Furthermore, for any
$\omega \in \Omega_n$ there is a unique pair $(U(\omega),(A_1(\omega),\ldots,A_k(\omega))) \in
\bigcup_{p \in J(n)} \EEe_p \times \ssS_p(k)$ for which $(T_1(\omega),\ldots,T_k(\omega)) =
h_n(U(\omega),(A_1(\omega),\ldots,A_k(\omega)))$. Since $h_n$ is one-to-one, it is a Borel
isomorphisms and thus the field $\{(A_1(\omega),\ldots,A_k(\omega))\}_{\omega\in\Omega_n}$ is
measurable. Now it suffices to put $f(\omega) \df \frac{n}{p}$ provided $\omega \in \Omega_n$ is
such that $(A_1(\omega),\ldots,A_k(\omega)) \in \ssS_p(k)$. The very definition of $f$ shows that
$f$ is measurable and the whole construction yields the assertion of the lemma.
\end{proof}

Below we give a proof of \THM{0} in more general settings; that is, for $k$-tuples in place
of single operators.

\begin{proof}[Proof of \THM{0}]
Assume all the assumptions of item (a) are fulfilled. Then it follows from \LEM{infl} that we may
find measurable fields $\{(A_j(\omega_j)\}_{\omega_j\in\Omega_j}\ (j=1,2)$ of irreducible matrices
and measurable functions $f_j\dd \Omega_j \to \{1,2,\ldots,\infty\}\ (j=1,2)$ such that
$T_j(\omega_j)$ is unitarily equivalent to $f_j(\omega_j) \odot A_j(\omega_j)$ for $\mu_j$-almost
all $\omega_j \in \Omega_j$. We conclude that then $\int^{\oplus}_{\Omega_1} f_1(\omega_1) \odot
A_1(\omega_1) \dint{\mu_1(\omega_1)}$ is unitarily equivalent to $\int^{\oplus}_{\Omega_2}
f_2(\omega_2) \odot A_2(\omega_2) \dint{\mu_2(\omega_2)}$. Now an application of \THM{pr_deco}
easily finishes the proof of that part.\par
We turn to (b). First of all, observe that $T$ is affiliated with a finite type~I von Neumann
algebra iff $\WWw''(T)$ is such an algebra (because $T$ is affiliated with $\MmM$ iff $\MmM' \subset
\WwW'(T)$, iff $\WwW''(T) \subset \MmM$; and being finite and type~I is a hereditary property for
von Neumann algebras). So, if $\WwW''(T)$ is finite and type~I, the conclusion follows from Ernest's
central decomposition \cite{ern} or our Prime Decomposition Theorem \cite{pn1}. Conversely, if $T$
is unitarily equivalent to \eqref{eqn:repr} where $\{T(\omega)\}$ and $(\Omega,\Mm,\mu)$ are
as in item (b) of the theorem, then we argue as follows. \LEM{infl} shows we may replace $T(\omega)$
by $f(\omega) \odot A(\omega)$ where $A(\omega)$'s are mutually unitarily inequivalent irreducible
matrices. Further, the proof of \THM{pr_deco} includes the information that then the field
$\{f(\omega) \odot A(\omega)\}_{\omega\in\Omega}$ is regular and hence $\int^{\oplus}_{\Omega}
f(\omega) \odot A(\omega) \dint{\mu(\omega)}$ is the prime decomposition of $T$ (for details,
consult \cite{pn1}). Hence $\WwW''(T)$ is finite and type~I, because $\WwW''(A(\omega))$ is so for
any $\omega \in \Omega$ (see Corollary~5.6.7 in \cite{pn1}).
\end{proof}

\begin{rem}{tuple}
The reasoning presented above proves also a counterpart of \THM{0} for $k$-tuples of operators,
whose precise formulation and detailed proof are left to the reader as an exercise.
\end{rem}

\section{Finite type~I ideals}

First we recall the concept of an ideal introduced in \cite{pn1}.

\begin{dfn}{ideal}
A class $\IiI$ of $k$-tuples of operators is said to be an \textit{ideal} if $\IiI$ is nonempty and
each of the following three conditions is fulfilled:
\begin{enumerate}[({I}D1)]\addtocounter{enumi}{-1}
\item if $(T_1,\ldots,T_k) \in \IiI$ and a $k$-tuple $(T_1',\ldots,T_k')$ (acting in a totally
 arbitrary Hilbert space) is unitarily equivalent to $(T_1,\ldots,T_k)$, then $(T_1',\ldots,T_k')
 \in \IiI$;
\item if $\{(T_1^{(s)},\ldots,T_k^{(s)})\}_{s \in S}$ is a nonempty collection of $k$-tuples
 belonging to $\IiI$, then $(\bigoplus_{s \in S} T_1^{(s)},\ldots,\bigoplus_{s \in S} T_k^{(s)}) \in
 \IiI$;
\item whenever $(T_1,\ldots,T_k)$ is a member of $\IiI$ acting in a Hilbert space $\HHh$ and $\EEe$
 is a closed linear subspace of $\HHh$ (including trivial) which reduces each of $T_1,\ldots,T_k$,
 then $(T_1\bigr|_{\EEe},\ldots,T_k\bigr|_{\EEe})$ belongs to $\IiI$ as well.
\end{enumerate}
\end{dfn}

Ideals are natural generalisations (to $k$-tuples of operators) of part-properties studied
by Brown, Fong and Hadwin in \cite{bfh}.\par
For simplicity, we extend the notation (for inflations) introduced in the previous section
to arbitrary cardinal numbers: whenever $\alpha$ is a positive cardinal number and
$(X_1,\ldots,X_k)$ is a $k$-tuple of operators acting in a Hilbert space $\HHh$, then $\alpha \odot
(X_1,\ldots,X_k)$ denotes the inflation of $(X_1,\ldots,X_k)$ consisting of $\alpha$ copies of this
tuple; more formally, $\alpha \odot (X_1,\ldots,X_k)$ acts in $\bigoplus_{\xi<\bar{\alpha}} \HHh$
and coincides with $(\bigoplus_{\xi<\bar{\alpha}} X_1,\ldots,\bigoplus_{\xi<\bar{\alpha}} X_k)$
where $\bar{\alpha}$ is the initial ordinal of cardinality $\alpha$.\par
In the sequel we shall need the following characterisation of ideals, formulated as Corollary~3.6.6
in \cite{pn1}.

\begin{lem}{id}
A nonempty class $\IiI$ of $k$-tuples of operators which satisfies condition \textup{(ID0)} is
an ideal iff the following two conditions are fulfilled:
\begin{enumerate}[\upshape({I}D1')]
\item whenever $(T_1^{(s)},\ldots,T_k^{(s)})_{s \in S} \subset \IiI$ is a nonempty collection
 of mutually unitarily disjoint $k$-tuples of operators acting in separable Hilbert spaces, then
 $(\bigoplus_{s \in S} T_1^{(s)},\ldots,\bigoplus_{s \in S} T_k^{(s)}) \in \IiI$;
\item if $(X_1,\ldots,X_k)$ and $(Y_1,\ldots,Y_k)$ are two unitarily disjoint $k$-tuples such that
 $(X_1 \oplus Y_1,\ldots,X_k \oplus Y_k) \in \IiI$, then $(X_1,\ldots,X_k) \in \IiI$;
\item for any $k$-tuple $(X_1,\ldots,X_k)$ and each positive cardinal number $\alpha$, $\alpha \odot
 (X_1,\ldots,X_k) \in \IiI$ iff $(X_1,\ldots,X_k) \in \IiI$.
\end{enumerate}
\end{lem}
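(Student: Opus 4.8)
\textbf{Proof plan for \LEM{id}.}

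The plan is to prove both implications, the easy one being that any ideal satisfies (ID1$'$)--(ID3$'$). Indeed (ID1$'$) is a special case of (ID1); (ID3$'$) follows because $\alpha\odot(X_1,\ldots,X_k)$ is a direct sum of copies of $(X_1,\ldots,X_k)$ (use (ID1)) while conversely $(X_1,\ldots,X_k)$ is the restriction of $\alpha\odot(X_1,\ldots,X_k)$ to one summand (use (ID2)); and (ID2$'$) is immediate from (ID2) applied to the reducing subspace carrying $(X_1,\ldots,X_k)$. The substance is the reverse implication: assuming (ID0) together with (ID1$'$)--(ID3$'$), I must recover the full strength of (ID1) (closure under arbitrary direct sums, over arbitrary index sets, of tuples acting in arbitrary — possibly nonseparable — Hilbert spaces) and (ID2) (closure under passing to an arbitrary reducing subspace, not just a unitarily-disjoint direct summand).

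For (ID1), the key step is a normal-form/prime-decomposition argument for the tuple $(\bigoplus_{s\in S}T_1^{(s)},\ldots)$: first I would use (ID3$'$) to absorb multiplicities, writing each $(T_1^{(s)},\ldots,T_k^{(s)})\in\IiI$ and the whole direct sum in terms of their prime decompositions (the Prime Decomposition Theorem of \cite{pn1}, cited in the excerpt), so that the direct sum is reorganised as a direct sum of inflations of mutually unitarily disjoint ``prime'' pieces, each of which lies in $\IiI$ because it is a reducing restriction of some member $(T_1^{(s)},\ldots,T_k^{(s)})$ — here one invokes (ID2$'$) after separating a prime piece from its unitarily disjoint complement inside $(T_1^{(s)},\ldots,T_k^{(s)})$, which may itself require a transfinite/Zorn exhaustion since an arbitrary tuple need not be a finite direct sum of primes. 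Having reduced to a direct sum of mutually unitarily disjoint primes (each acting on a separable space, after again using (ID3$'$) to strip multiplicities), (ID1$'$) finishes the job, and then (ID3$'$) restores the multiplicities. For (ID2), given a reducing subspace $\EEe\subset\HHh$ for a member $(T_1,\ldots,T_k)$, I would again pass to the prime decomposition: each prime component of $(T_1|_\EEe,\ldots,T_k|_\EEe)$ is unitarily equivalent to a subtuple of a corresponding prime component of $(T_1,\ldots,T_k)$ and hence, by (ID3$'$) (absorbing/creating multiplicities) and (ID2$'$), lies in $\IiI$; then reassemble via the already-established (ID1).

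The main obstacle I expect is the bookkeeping in the nonseparable case and the transfinite exhaustion needed to split an arbitrary member of $\IiI$ into a direct sum of mutually unitarily disjoint prime parts plus a remainder — one must check that (ID2$'$), which is stated only for a single binary unitarily-disjoint splitting, iterates correctly along an ordinal and passes to the limit (the limit step uses (ID1$'$) applied to the separable prime pieces already extracted, so the two conditions are genuinely intertwined). Once that transfinite splitting is in place, everything else is a routine reassembly using (ID3$'$) to move multiplicities in and out; the argument is essentially the one carried out in detail in \cite{pn1} (Corollary~3.6.6 there), so at this point I would simply refer the reader to that source for the remaining verifications.
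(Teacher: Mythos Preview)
The paper gives no proof of this lemma at all: it is simply quoted as Corollary~3.6.6 of \cite{pn1}, with no argument sketched. Your proposal ultimately lands in the same place (you defer to \cite{pn1} for the ``remaining verifications''), so in that sense there is nothing to compare.

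That said, your sketch of the hard direction has a potential circularity you should be aware of. You invoke ``the Prime Decomposition Theorem of \cite{pn1}'' to reorganise an arbitrary direct sum into inflations of mutually unitarily disjoint prime pieces. In \cite{pn1} that theorem is Theorem~5.6.14, whereas the result you are proving is Corollary~3.6.6 --- two full chapters earlier. The Prime Decomposition Theorem is formulated for \emph{ideals} and its proof presumably uses the structural characterisation of ideals you are trying to establish; at minimum you would need to check that it does not. What the actual proof in Chapter~3 of \cite{pn1} almost certainly uses instead is a more primitive decomposition (available for a single $k$-tuple, not for an ideal): every $k$-tuple is unitarily equivalent to a direct sum $\bigoplus_{s} \alpha_s \odot \ueT_s$ where the $\ueT_s$ act in separable spaces and are mutually unitarily disjoint. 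That statement is elementary von~Neumann--algebra bookkeeping (reduce $\WwW'(T_1,\ldots,T_k)$ over its centre, or run a Zorn argument on maximal families of mutually disjoint separable parts) and does not presuppose that $\IiI$ is an ideal. Once you have that in hand, your outline --- strip multiplicities with (ID3$'$), use (ID2$'$) to see each separable disjoint piece lies in $\IiI$, reassemble with (ID1$'$), then restore multiplicities with (ID3$'$) --- goes through exactly as you describe. So the plan is right; just be careful to cite the elementary decomposition rather than the full Prime Decomposition Theorem.
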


\begin{lem}{fin1}
Let $\IiI^f_I$ be the class of all $k$-tuples $(T_1,\ldots,T_k)$ of operators \textup{(}including
all those acting in zero-dimensional Hilbert spaces\textup{)} such that $\WwW''(T_1,\ldots,T_k)$ is
finite and type~I. Then $\IiI^f_I$ is an ideal.
\end{lem}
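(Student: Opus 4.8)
The plan is to verify the three conditions (ID1'), (ID2), (ID3) from \LEM{id}, since (ID0) is immediate: unitary equivalence carries $\WwW''(T_1,\ldots,T_k)$ onto $\WwW''(T_1',\ldots,T_k')$, so being finite and type~I is preserved; and $\IiI^f_I$ is nonempty as it contains any single irreducible matrix tuple. The key point throughout is the standard dictionary $\WwW''(T_1,\ldots,T_k) = (\WwW'(T_1,\ldots,T_k))'$, so that "finite type~I for $\WwW''$" is equivalent to "$\WwW'$ is finite type~I" (the commutant of a finite type~I algebra in $\BBb(\HHh)$ is again finite type~I, and vice versa), and to the existence of an abelian projection in $\WwW'$ with central carrier $I$ together with finiteness.

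For (ID3), if $(X_1,\ldots,X_k)$ acts on $\HHh$ and $\alpha$ is a positive cardinal, then $\WwW'(\alpha\odot(X_1,\ldots,X_k))$ is naturally $\ast$-isomorphic to $\WwW'(X_1,\ldots,X_k)\,\bar\otimes\,\BBb(\ell_2(\alpha))$ (equivalently $\WwW''$ of the inflation is $\WwW''(X_1,\ldots,X_k)\,\bar\otimes\,\CCC I$), and tensoring with $\BBb(\ell_2(\alpha))$ preserves and reflects the properties "finite" and "type~I" — here one must be a little careful: $\BBb(\ell_2(\alpha))$ is finite only when $\alpha$ is finite, so the correct statement is that $M\,\bar\otimes\,\BBb(\ell_2(\alpha))$ is finite type~I iff $M$ is finite type~I, which holds because type~$\mathrm{I}_n$ algebras are stable under amplification and finiteness of a type~I algebra is governed by the (finite-valued) dimension function on the center, unaffected by tensoring the "other" factor by $\BBb(\ell_2(\alpha))$. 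I would cite the relevant structure facts (or Corollary~5.6.7 in \cite{pn1}) rather than reprove them.

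For (ID2), suppose $(X_1,\ldots,X_k)$ and $(Y_1,\ldots,Y_k)$ are unitarily disjoint and $(X_1\oplus Y_1,\ldots,X_k\oplus Y_k)\in\IiI^f_I$. Unitary disjointness means no nonzero subtuple of the $X$-tuple is unitarily equivalent to a subtuple of the $Y$-tuple; the point is that then the projection $P$ onto the $X$-summand lies in $\WwW'(X_1\oplus Y_1,\ldots,X_k\oplus Y_k)$ and in fact is central there (any partial isometry in $\WwW'$ intertwining $X$ and $Y$ subtuples would witness a unitary equivalence of subtuples, contradicting disjointness). Hence $\WwW'$ of the $X$-tuple is (isomorphic to) the reduced algebra $\WwW'(\cdots)_P$, which is a corner of a finite type~I algebra by a central projection and therefore again finite type~I; so $(X_1,\ldots,X_k)\in\IiI^f_I$. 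The cleanest way to package the "$P$ is central in $\WwW'$" assertion is to invoke the unitary-disjointness machinery of \cite{pn1} (e.g. the lemmas used in the proof of \THM{meas}); I expect this step to be essentially bookkeeping once that machinery is cited.

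The main obstacle is (ID1'): given a nonempty family $(T_1^{(s)},\ldots,T_k^{(s)})_{s\in S}$ of mutually unitarily disjoint tuples on separable spaces, each with $\WwW''$ finite type~I, show the direct sum again has $\WwW''$ finite type~I. Mutual unitary disjointness forces $\WwW'\bigl(\bigoplus_s T_1^{(s)},\ldots\bigr)=\bigoplus_s \WwW'(T_1^{(s)},\ldots,T_k^{(s)})$ as an $\ell^\infty$-direct sum of von Neumann algebras: again the off-diagonal blocks of any element of the commutant must vanish because a nonzero $(s,s')$-block would intertwine disjoint tuples. A direct sum (even over an uncountable index set) of finite type~I algebras is finite type~I — finiteness is preserved under arbitrary $\ell^\infty$-direct sums (a family of finite traces assembles to a separating family on the sum), and "type~I" is likewise preserved since each summand has an abelian projection with full central carrier and these assemble. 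So the real content is the block-decomposition of the commutant under unitary disjointness, which is exactly the kind of statement codified in \cite{pn1}; I would reduce (ID1') to that and to the stability of "finite type~I" under $\ell^\infty$-sums, the latter being a short direct argument. Once (ID0)--(ID1') are in hand the lemma follows from \LEM{id}.
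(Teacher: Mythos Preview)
Your ``key point'' that $\WwW''(T_1,\ldots,T_k)$ is finite type~I iff $\WwW'(T_1,\ldots,T_k)$ is finite type~I is false, and this is what your argument rests on. Type~I does pass to commutants, but finiteness does not: take $(T_1,\ldots,T_k)$ to be the $k$-tuple of zero operators on $\ell_2$; then $\WwW''=\CCC I$ is type~$\tI_1$ (in particular finite), while $\WwW'=\BBb(\ell_2)$ is type~$\tI_\infty$, hence not finite. Consequently your claim in (ID3') that $M\,\bar\otimes\,\BBb(\ell_2(\alpha))$ is finite type~I iff $M$ is finite type~I is wrong (take $M=\CCC$, $\alpha=\aleph_0$), and your (ID1') argument breaks at the point where you apply ``direct sum of finite type~I algebras is finite type~I'' to $\bigoplus_s \WwW'(T^{(s)})$: the hypothesis gives finiteness of $\WwW''(T^{(s)})$, not of $\WwW'(T^{(s)})$.

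The fixes are easy and are essentially what the paper does. For (ID3') use your own parenthetical: $\WwW''(\alpha\odot(X_1,\ldots,X_k))$ is $*$-isomorphic to $\WwW''(X_1,\ldots,X_k)$, and that is all you need. For (ID1') one can either take one more commutant in your computation (from $\WwW'(\bigoplus)=\bigoplus_s\WwW'(T^{(s)})$ one gets $\WwW''(\bigoplus)=\bigoplus_s\WwW''(T^{(s)})$, a direct sum of finite type~I algebras), or argue as the paper does: write $\IiI^f_I=\IiI_I\cap\IiI^f$, quote \cite{pn1} for $\IiI_I$ being an ideal, and for $\IiI^f$ observe that for \emph{any} direct sum (no disjointness needed) $\WwW''(\bigoplus_s T^{(s)})$ embeds as a unital $*$-subalgebra of $\prod_s\WwW''(T^{(s)})$, hence is finite. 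Your (ID2') is fine once you note that the projection $P$ is central in $\WwW'$ and therefore also in $\WwW''$, so $\WwW''(X_1,\ldots,X_k)$ is a central direct summand of the finite type~I algebra $\WwW''(X\oplus Y)$; there is no need to pass through $\WwW'$ at all.
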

\begin{proof}
Let $\IiI_I$ and $\IiI^f$ be the classes of all $k$-tuples $(T_1,\ldots,T_k)$ (including all those
acting in zero-dimensional Hilbert spaces) such that $\WwW''(T_1,\ldots,T_k)$ is type~I and,
respectively, finite. Then $\IiI^f_I = \IiI^f \cap \IiI_I$, and $\IiI_I$ is an ideal, by \cite{pn1}.
Therefore it is enough to show that $\IiI^f$ is an ideal. It is clear that (ID0) is satisfied. If
$\{(T_1^{(s)},\ldots,T_k^{(s)})\}_{s \in S}$ is a nonempty collection of members of $\IiI^f$, then
the algebra $\WwW''(\bigoplus_{s \in S} T_1^{(s)},\ldots,\bigoplus_{s \in S} T_k^{(s)})$ is
(naturally) $*$-isomorphic to a unital subalgebra of $\prod_{s \in S} \WwW''(T_1^{(s)},\ldots,
T_k^{(s)})$ and thus $\WwW''(\bigoplus_{s \in S} T_1^{(s)},\ldots,\bigoplus_{s \in S} T_k^{(s)})$ is
finite. This shows (ID1) as well as (ID1'). Further, since $\WwW''(\alpha \odot (X_1,\ldots,X_k))$
is $*$-isomorphic to $\WwW''(X_1,\ldots,X_k)$, we see that also (ID3') holds. So, it remains
to check (ID2'). To this end, assume $(X_1,\ldots,X_k)$ and $(Y_1,\ldots,Y_k)$ are two unitarily
disjoint $k$-tuples such that $(X_1 \oplus Y_1,\ldots,X_k \oplus Y_k) \in \IiI^f$. We infer from
the unitary disjointness of them that $\WwW''(X_1 \oplus Y_1,\ldots,X_k \oplus Y_k)$ is
$*$-isomorphic to $\WwW''(X_1,\ldots,X_k) \times \WwW''(Y_1,\ldots,Y_k)$ which implies that both
the von Neumann algebras $\WwW''(X_1,\ldots,X_k)$ and $\WwW''(Y_1,\ldots,Y_k)$ are finite. This
completes the proof.
\end{proof}

We now introduce

\begin{dfn}{fin1}
An ideal $\IiI$ of $k$-tuples is said to be \textit{finite and type~I} if $\IiI \subset \IiI^f_I$.
\end{dfn}

To every ideal of $k$-tuples there naturally corresponds an ideal of unitary equivalence classes
of $k$-tuples (based on (ID0)). We shall identify both these concepts (i.e., in the realms
of `concrete' $k$-tuples and their unitary equivalence classes). In \cite{pn1} we formulated Prime
Decomposition Theorem for totally arbitrary ideals of unitary equivalence classes of $k$-tuples.
It involved so-called \textit{coverings} defined on (so-called) \textit{multi-standard measurable
spaces with nullities} which we now recall. A \textit{measurable space with nullity} is a triple
$(\xxX,\Mm,\NnN)$ where $\xxX$ is a set, $\Mm$ a $\sigma$-algebra of subsets of $\xxX$ and $\NnN$ is
a $\sigma$-ideal in $\Mm$. $\NnN$ is a counterpart of the $\sigma$-ideal of all sets of measure zero
in a measure space and thus its members are called \textit{null} sets. A set $\bbB \in \Mm$
in a measurable space with nullity $(\xxX,\Mm,\NnN)$ is called \textit{standard} if there exists
a $\sigma$-finite (or, equivalently, finite) positive measure $\mu$ on $\Mm\bigr|_{\bbB} \df \{\aaA
\in \Mm\dd\ \aaA \subset \bbB\}$ such that the $\sigma$-ideals $\NnN(\mu) \df \{\zzZ \in
\Mm\bigr|_{\bbB}\dd\ \mu(\zzZ) = 0\}$ and $\NnN\bigr|_{\bbB} \df \{\zzZ \in \NnN\dd\ \zzZ \subset
\bbB\}$ coincide, and $(\bbB,\Mm\bigr|_{\bbB},\mu)$ is a standard measure space. Finally,
$(\xxX,\Mm,\NnN)$ is \textit{multi-standard} if there exists a collection (called a \textit{standard
base}) $\{\bbB_s\}_{s \in S} \subset \Mm$ of pairwise disjoint sets satisfying the following
conditions:
\begin{itemize}
\item $\bbB_s$ is standard for any $s \in S$;
\item $\xxX \setminus \bigcup_{s \in S} \bbB_s \in \NnN$;
\item a subset $\aaA$ of $\bigcup_{s \in S} \bbB_s$ belongs to $\Mm$ iff $\aaA \cap \bbB_s \in \Mm$
 for each $s \in S$;
\item an arbitrary set $\aaA \subset \bigcup_{s \in S} \bbB_s$ belongs to $\NnN$ iff $\aaA \cap
 \bbB_s \in \NnN$ for all $s \in S$.
\end{itemize}
Next, we recall so-called `continuous' direct sums of measurable fields defined on multi-standard
measurable spaces with nullities. To make the presentation shorter, here we restrict only
to measurable fields whose values are irreducible $k$-tuples of matrices. To this end, let
$\Ii_n(k)$ be as in \DEF{uecs} and let $\Ii(k) = \bigsqcup_{n=1}^{\infty} \Ii_n(k)$ be
the topological disjoint union of all $\Ii_n(k)$'s (with fixed $k$). Then $\Ii(k)$ is separable,
locally compact and metrizable. Further, let $\ssS(k)$ be as in Section~2 (that is, $\ssS(k)$ meets
the unitary equivalence class of each irreducible $k$-tuple of matrices at exactly one point). Now
let $(\xxX,\Mm,\NnN)$ be a multi-standard measurable space with nullity, $\{\bbB_s\}_{s \in S}$ be
its standard base and for each $s \in S$, let $\mu_s$ be a standard measure on $\Mm\bigr|_{\bbB_s}$
such that $\NnN(\mu_s) = \NnN\bigr|_{\bbB_s}$. A cardinal-valued function $f\dd \xxX \to \Card$ is
said to be \textit{measurable} if:
\begin{itemize}
\item $f^{-1}(\{\alpha\}) \in \Mm$ for any cardinal $\alpha$;
\item for any standard set $\bbB \in \Mm$ there is $\zzZ \in \NnN$ such that the set $f(\bbB
 \setminus \zzZ)$ is countable (finite or not).
\end{itemize}
Additionally, $f$ is \textit{nontrivial} if $\xxX \setminus f^{-1}(\{0\}) \notin \NnN$. For
a measurable function $\Phi\dd \xxX \to \Ii(k)$ and a measurable nontrivial cardinal-valued function
$f\dd \xxX \to \Card$ one defines (following \cite{pn1}) $\bigoplus^{\NnN}_{x\in\xxX} f(x) \odot
\Phi(x)$ as the unitary equivalence class of the $k$-tuple $(X_1,\ldots,X_k)$ where
\begin{equation*}
X_j = \bigoplus_{\alpha>0} \alpha \odot \Bigl(\bigoplus_{s \in S} \int^{\oplus}_{\bbB_{s,\alpha}}
T_j(x) \dint{\mu_s(x)}\Bigr)
\end{equation*}
with $\bbB_{s,\alpha} = \bbB_s \cap f^{-1}(\{\alpha\})$ and for each $x \in \xxX$, $(T_1(x),\ldots,
T_k(x))$ is a (unique) member of $\ssS(k)$ whose unitary equivalence class coincides with $\Phi(x)$.
One proves (consult \cite{pn1}) that the above definition is correct; that is, it is independent
of the choice of the transversal $\ssS(k)$, standard base $\{\bbB_s\}_{s \in S}$ and standard
measures $\mu_s$'s.\par
Finally, a \textit{covering} is any measurable field $\Phi\dd \xxX \to \Ii(k)$ defined
on a multi-standard measurable space with nullity $(\xxX,\Mm,\NnN)$ such that
$\bigoplus^{\NnN}_{x\in\bbB} \Phi(x)$ is unitarily disjoint from $\bigoplus^{\NnN}_{x\in\bbB'}
\Phi(x)$ for any two disjoint sets $\bbB, \bbB' \in \Mm$. Coverings were characterised
in Theorem~5.5.5 in \cite{pn1}. Below we formulate its equivalent form for coverings with values
in $\Ii(k)$. Below $\psi\dd \Ii(k) \to \ssS(k)$ is a function given by the rule: for each $\Dd \in
\Ii(k)$, $\psi(\Dd)$ is a unique point of $\Dd \cap \ssS(k)$.

\begin{thm}{cov}
A measurable function $\Phi\dd \xxX \to \Ii(k)$ defined on a multi-standard measurable space with
nullity $(\xxX,\Mm,\NnN)$ is a covering iff for any standard set $\bbB \in \Mm$ there is $\zzZ \in
\NnN$ such that $\psi(\Phi(\bbB \setminus \zzZ))$ is a measurable domain and the restriction
of $\psi \circ \Phi$ to $\bbB \setminus \zzZ$ is a Borel isomorphism \textup{(}of $\bbB \setminus
\zzZ$ onto its image\textup{)}.
\end{thm}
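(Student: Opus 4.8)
The theorem asserts an equivalence between a ``global'' property of a measurable field $\Phi\dd\xxX\to\Ii(k)$ (being a covering) and a ``local'' one (on each standard set, after deleting a null set, $\psi\circ\Phi$ is a Borel isomorphism onto a measurable domain). The strategy is to reduce the statement on a general multi-standard space to the standard case via the standard base $\{\bbB_s\}_{s\in S}$, and on each standard piece to identify the field with a direct integral of irreducible $k$-tuples of matrices, so that \THM{meas} and the prime-decomposition machinery of \cite{pn1} (in particular Corollaries 5.4.7 and 5.6.7, and Lemma 5.4.10) apply. Throughout I would work with the transversal $\ssS(k)$, replacing $\Phi$ by the measurable function $\varphi=\psi\circ\Phi\dd\xxX\to\ssS(k)$, so that on a standard set $\bbB$ equipped with a standard measure $\mu_{\bbB}$ the class $\bigoplus^{\NnN}_{x\in\bbB}\Phi(x)$ is represented by the genuine direct integral $\int^{\oplus}_{\bbB}\id\dint{(\varphi_*\mu_{\bbB})}$ over $\varphi(\bbB)\subset\ssS(k)$ (pushing the measure forward along $\varphi$).

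\emph{The `if' direction.} Assume the local condition. Fix two disjoint sets $\bbB,\bbB'\in\Mm$; I must show the classes $\DD=\bigoplus^{\NnN}_{x\in\bbB}\Phi(x)$ and $\DD'=\bigoplus^{\NnN}_{x\in\bbB'}\Phi(x)$ are unitarily disjoint. By the very definition of the $\NnN$-direct sum and the fact that a countable union of unitarily disjoint tuples stays disjoint from another such union, it suffices to treat $\bbB,\bbB'$ standard. Shrinking by null sets as permitted, $\varphi(\bbB\setminus\zzZ)$ and $\varphi(\bbB'\setminus\zzZ')$ are measurable domains and $\varphi$ restricts to Borel isomorphisms on each. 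Now $\DD$ and $\DD'$ are represented by direct integrals of the identity field over these two measurable domains; since the fields are \textit{regular} (Proposition~5.4.4 of \cite{pn1}), these are \textit{prime decompositions} in the sense of \cite{pn1}, and two prime decompositions supported on disjoint measurable-domain subsets of $\ssS(k)$ are unitarily disjoint --- this is exactly the content of the relevant results in \cite{pn1} (the same mechanism used in the proof of \THM{meas} via Lemma~5.4.10 there). Hence $\DD\disj\DD'$, so $\Phi$ is a covering.

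\emph{The `only if' direction.} Assume $\Phi$ is a covering; fix a standard set $\bbB\in\Mm$ with standard measure $\mu_{\bbB}$. First, applying \THM{meas} to the $\sigma$-finite Borel measure $\varphi_*\mu_{\bbB}$ on $\ssS(k)$, there is a null set $\zzZ_0$ with $\varphi(\bbB\setminus\zzZ_0)$ contained in a measurable domain; intersecting with each $\mmM_n^k$-slice and using that $\varphi$ is $\Mm$-measurable, I may in fact arrange that $\varphi(\bbB\setminus\zzZ_0)$ \emph{is} a measurable domain. The remaining, and main, point is to upgrade $\varphi|_{\bbB\setminus\zzZ}$ to a \emph{Borel isomorphism onto its image}, i.e.\ to show it is essentially injective: off a further null set $\zzZ_1\supset\zzZ_0$, $\varphi$ is one-to-one on $\bbB\setminus\zzZ_1$. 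Suppose not. Then the pushforward $\varphi_*(\mu_{\bbB}|_{\bbB\setminus\zzZ})$ has an ``atom of multiplicity'' or, more precisely, by a standard exhaustion/measurable-selection argument one splits $\bbB\setminus\zzZ$ into two disjoint non-null measurable sets $\bbB_1,\bbB_2$ on which $\varphi$ takes the \emph{same} values up to a null set (use that the diagonal in $\ssS(k)\times\ssS(k)$ is Borel and the von Neumann selection theorem applied to the relation ``$\varphi(x)=\varphi(y)$''). Then $\bigoplus^{\NnN}_{x\in\bbB_1}\Phi(x)$ and $\bigoplus^{\NnN}_{x\in\bbB_2}\Phi(x)$ are built from direct integrals against mutually absolutely continuous measures on the \emph{same} subset of $\ssS(k)$, so by Corollary~5.4.7 of \cite{pn1} they have a common nontrivial unitarily equivalent part --- contradicting that $\Phi$ is a covering (applied to the disjoint pair $\bbB_1,\bbB_2$). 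Hence $\varphi$ is essentially injective on $\bbB$, and combined with the measurable-domain conclusion this gives the asserted $\zzZ\in\NnN$ with $\psi\circ\Phi|_{\bbB\setminus\zzZ}$ a Borel isomorphism onto the measurable domain $\psi(\Phi(\bbB\setminus\zzZ))$.

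\emph{Main obstacle.} The delicate step is the essential-injectivity argument in the `only if' part: turning the abstract failure of injectivity into a genuine decomposition $\bbB=\bbB_1\sqcup\bbB_2$ with non-null pieces on which $\varphi$ agrees a.e., in a way that is measurable and compatible with the standard base. This requires a careful application of a measurable-selection/uniformization theorem on the Borel equivalence relation $\{(x,y):\varphi(x)=\varphi(y)\}$ in $(\bbB\setminus\zzZ)^2$, together with the fact (from \cite{pn1}) that overlapping prime decompositions against non-singular measures are never unitarily disjoint. Once this is in place, everything else is bookkeeping: reducing to standard pieces, pushing measures forward along $\varphi$, and invoking \THM{meas} and the cited corollaries of \cite{pn1} about prime decompositions and type~$\tI$ algebras.
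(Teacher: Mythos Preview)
The paper does not prove this theorem: it is introduced with ``Coverings were characterised in Theorem~5.5.5 in \cite{pn1}. Below we formulate its equivalent form for coverings with values in $\Ii(k)$,'' and no argument follows the statement. The result is simply quoted from \cite{pn1}, so there is no proof in the present paper to compare yours against.

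That said, your sketch has a genuine gap in the `if' direction. After reducing to disjoint \emph{standard} sets $\bbB,\bbB'$, you claim the resulting prime decompositions are ``supported on disjoint measurable-domain subsets of $\ssS(k)$'' and are therefore unitarily disjoint. But disjointness of $\bbB$ and $\bbB'$ in $\xxX$ does not by itself imply disjointness of $\varphi(\bbB\setminus\zzZ)$ and $\varphi(\bbB'\setminus\zzZ')$ in $\ssS(k)$: in the covering of \PRO{full}, for instance, $\Phi$ is the projection $\Ii(k)\times S\to\Ii(k)$, and distinct base elements have identical $\Phi$-images. The repair is to apply the hypothesis not to $\bbB$ and $\bbB'$ separately but to the standard set $\bbB\cup\bbB'$ (a finite union of standard sets is again standard); this yields a single null set $\zzZ$ off which $\psi\circ\Phi$ is injective on the union, forcing $\varphi(\bbB\setminus\zzZ)$ and $\varphi(\bbB'\setminus\zzZ)$ to be genuinely disjoint in $\ssS(k)$, at which point the unitary-disjointness argument via \cite{pn1} goes through. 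With that correction, your overall strategy---pushing forward to $\ssS(k)$, invoking \THM{meas}, and using the prime-decomposition/regularity machinery of \cite{pn1} for both directions---is sound and is in the spirit of how such characterisations are proved in \cite{pn1}.
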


Prime Decomposition Theorem (Theorem~5.6.14 in \cite{pn1}) for finite type~I ideals reads
as follows.

\begin{thm}{prime}
Let $\IiI$ be a finite type~I ideal of unitary equivalence classes of $k$-tuples.
\begin{enumerate}[\upshape(A)]
\item There exists a covering $\Phi\dd \xxX \to \Ii(k)$ defined on $(\xxX,\Mm,\NnN)$ such that
 \begin{equation*}
 \IiI = \IiI(\Phi) \df \Bigl\{\contS[\!]{x\in\xxX}{\NnN} f(x) \odot \Phi(x)|\quad f\dd \xxX \to
 \Card \textup{ measurable}\Bigr\}.
 \end{equation*}
\item If $\Phi\dd \xxX \to \Ii(k)$ and $\Phi\dd \xxX' \to \Ii(k)$ are coverings defined on,
 respectively, $(\xxX,\Mm,\NnN)$ and $(\xxX',\Mm',\NnN')$ such that $\IiI(\Phi) = \IiI(\Phi')$, then
 there are sets $\zzZ \in \NnN$, $\zzZ' \in \NnN'$ and a Borel isomorphism $\Psi\dd \xxX \setminus
 \zzZ \to \xxX' \setminus \zzZ'$ such that $\{\Psi(\bbB)\dd\ \bbB \in \NnN\bigr|_{\zzZ}\} =
 \NnN'\bigr|_{\zzZ'}$ and $\Phi'(\Psi(x)) = \Phi'(x)$ for any $x \in \xxX \setminus \xxX'$.
\item If $\Phi\dd \xxX \to \Ii(k)$ is a covering as in \textup{(A)}, then for two measurable
 functions $f,g\dd \xxX \to \Card$ one has:
 \begin{itemize}
 \item $\bigoplus^{\NnN}_{x\in\xxX} f(x) \odot \Phi(x) = \bigoplus^{\NnN}_{x\in\xxX} g(x) \odot
  \Phi(x)$ iff $f = g$ a.e.; that is, iff $\{x \in \xxX\dd\ f(x) \neq g(x)\} \in \NnN$;
 \item $\bigoplus^{\NnN}_{x\in\xxX} (f(x)+g(x)) \odot \Phi(x) = (\bigoplus^{\NnN}_{x\in\xxX} f(x)
  \odot \Phi(x)) \oplus (\bigoplus^{\NnN}_{x\in\xxX} g(x) \odot \Phi(x))$.
 \end{itemize}
\item If $\Phi\dd \xxX \to \Ii(k)$ is a covering, then $\IiI(\Phi)$ is a finite type~I ideal.
\end{enumerate}
\end{thm}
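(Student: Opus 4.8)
The plan is to reduce the theorem to the prime decomposition of a \emph{single} $k$-tuple affiliated with a finite type~I von Neumann algebra and then to glue these local decompositions together. By the $k$-tuple counterpart of \THM{0} (see \REM{tuple}), obtained via \LEM{infl} and \THM{pr_deco}, every separable member $(T_1,\ldots,T_k)$ of $\IiI$ is unitarily equivalent to $\bigoplus_{\alpha>0}\alpha\odot\int^{\oplus}_{\Omega_\alpha}(A_1(\omega),\ldots,A_k(\omega))\dint{\mu_\alpha(\omega)}$, where for each $\alpha$ the field $\{(A_1(\omega),\ldots,A_k(\omega))\}$ consists, over a standard measure space, of pairwise unitarily inequivalent irreducible matrices (one applies \LEM{infl} to the representation of \THM{0} and regroups the fibres according to the value of the multiplicity function, the regularity of the resulting field being extracted from the proof of \THM{pr_deco}). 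Transporting each such field through the transversal $\ssS(k)$ and the map $\psi$ of \THM{cov} this says precisely that $(T_1,\ldots,T_k)$ has the form $\bigoplus^{\NnN}_{x\in\bbB}g(x)\odot x$ over a standard measurable space with nullity whose underlying set $\bbB$ is a Borel subset of $\Ii(k)$. Members acting in nonseparable Hilbert spaces are reduced to the separable case afterwards by means of (ID1').

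For part~(A) I would let $\xxX\subset\Ii(k)$ be the set of all unitary equivalence classes of irreducible $k$-tuples of matrices occurring as a block in the prime decomposition of some member of $\IiI$; each such class, taken with multiplicity one, lies in $\IiI$ by (ID2). Put $\Mm=\Bb(\Ii(k))\bigr|_\xxX$, let $\Phi\dd\xxX\to\Ii(k)$ be the inclusion, and declare $\zzZ\in\Mm$ to be \emph{null} precisely when $\mu(\zzZ)=0$ for every $\sigma$-finite Borel measure $\mu$ on $\xxX$ with $\int^{\oplus}_{\xxX}\psi(x)\dint{\mu(x)}\in\IiI$. Here \THM{meas} does the decisive work: any such $\mu$ is supported on a measurable domain, so refining the sets $\kkK_{n,m}$ from the proof of \THM{meas} yields a partition of $\xxX$ into countably many measurable domains which, by \THM{cov}, is a standard base witnessing that $(\xxX,\Mm,\NnN)$ is multi-standard and that $\Phi$ is a covering. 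The inclusion $\IiI(\Phi)\subset\IiI$ is then formal, since $\bigoplus^{\NnN}_{x\in\xxX}f(x)\odot\Phi(x)$ is by definition an arbitrary direct sum of inflations of direct integrals of members of $\IiI$, all of which operations preserve $\IiI$ by \LEM{id}; the opposite inclusion is exactly the single-tuple decomposition of the first paragraph, the only delicate point being that the multiplicity function obtained there is measurable in the covering sense (countable-valued off a null set on every standard piece), which is again guaranteed by \THM{meas}.

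For part~(B) I would compare two coverings with $\IiI(\Phi)=\IiI(\Phi')$ piece by piece: on the overlap of a standard piece of one standard base with a standard piece of the other, \THM{cov} identifies the ranges of $\psi\circ\Phi$ and $\psi\circ\Phi'$ with measurable domains, and \THM{pr_deco} provides there a measure-class-preserving Borel isomorphism carrying $\Phi$ to $\Phi'$ and preserving multiplicities; these patch, off suitable null sets, into the desired $\Psi$, and the matching of nullities $\{\Psi(\bbB)\dd\ \bbB\in\NnN\bigr|_\zzZ\}=\NnN'\bigr|_{\zzZ'}$ is condition (E2) of \THM{pr_deco} globalised. Part~(C) then follows at once: its second bullet is immediate from the definition of $\bigoplus^{\NnN}$ (form the sets $\bbB_{s,\alpha}$ for $f+g$ from those for $f$ and for $g$), while in the first bullet ``if'' is clear and ``only if'' is the uniqueness half of (B) with $\Phi'=\Phi$, a genuine difference of $f$ and $g$ on a non-null set producing inequivalent direct integrals by \THM{pr_deco}. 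For part~(D) I would verify the hypotheses of \LEM{id}: (ID0) is trivial; (ID3') holds because an inflation only rescales multiplicities and leaves $\WwW''$ unchanged up to $*$-isomorphism; for (ID1'), disjoint Borel subsets of $\xxX$ give unitarily disjoint tuples (the defining property of a covering), so a mutually disjoint family of separable members of $\IiI(\Phi)$ has multiplicity functions supported off $\NnN$ on pairwise disjoint sets --- only countably many of which meet any fixed standard piece in a non-null set, by $\sigma$-finiteness --- whence their sum is a measurable function $h$ and their direct sum equals $\bigoplus^{\NnN}_{x\in\xxX}h(x)\odot\Phi(x)$; (ID2') holds because restricting $\bigoplus^{\NnN}_{x\in\xxX}f(x)\odot\Phi(x)$ to a reducing subspace again produces a tuple of this form with a pointwise-smaller multiplicity function, by the way prime decompositions restrict; and $\IiI(\Phi)\subset\IiI^f_I$ because on each standard piece $\int^{\oplus}\psi(x)\dint{\mu(x)}$ generates a finite type~$\tI_n$ algebra (Corollary~5.6.7 in \cite{pn1}, as used in the proof of \THM{meas}), and both finiteness and type~I survive inflations and arbitrary direct sums exactly as in \LEM{fin1}.

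The step I expect to be the main obstacle is part~(A): choosing the nullity $\NnN$ so that $(\xxX,\Mm,\NnN)$ is genuinely multi-standard while still capturing \emph{every} member of $\IiI$ --- the nonseparable ones and those with a continuous part included --- with a globally measurable multiplicity function. The whole amalgamation rests on \THM{meas}, which confines the pieces of any realised measure to measurable domains and thereby furnishes at once the standard base and the measurability of multiplicities; without it the assembly of uncountably many local prime decompositions into a single covering would break down.
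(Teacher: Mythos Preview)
The paper does not prove this theorem at all: it is stated as the specialisation to finite type~I ideals of Theorem~5.6.14 in \cite{pn1}, and is simply recalled from that reference. So there is no ``paper's own proof'' to compare against; what follows concerns the soundness of your sketch.

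Your construction in part~(A) has a genuine gap. You take $\xxX$ to be a Borel subset of $\Ii(k)$ with $\Mm=\Bb(\Ii(k))\bigr|_{\xxX}$ and declare $\zzZ$ null when $\mu(\zzZ)=0$ for every admissible measure $\mu$. For $\IiI=\IiI^f_I$ every Dirac measure $\delta_x$ ($x\in\Ii(k)$) is admissible, so your $\NnN$ reduces to $\{\varnothing\}$. Any standard piece $\bbB_s$ must then carry a $\sigma$-finite measure with \emph{no} nonempty null sets, which forces $\bbB_s$ to be countable. But the multi-standard axiom requires that a set $\aaA\subset\bigcup_s\bbB_s$ belong to $\Mm$ as soon as each $\aaA\cap\bbB_s$ does; with uncountably many countable pieces this would make \emph{every} subset of $\xxX$ measurable, contradicting $\Mm=\Bb(\Ii(k))\bigr|_{\xxX}$. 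Thus your $(\xxX,\Mm,\NnN)$ is not multi-standard, and $\Phi$ is not a covering in the required sense. The paper's \PRO{full} shows what is actually needed: one must pass to $\xxX=\Ii(k)\times S$, where $S$ indexes a maximal family of mutually singular probability measures, so that distinct measure classes live on disjoint fibres rather than on the same copy of $\Ii(k)$. Your intuition that \THM{meas} is the engine is correct (it is exactly what makes \PRO{cov} work), but it does not let you avoid the fibred construction.

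The sketches for (B)--(D) are broadly reasonable in spirit, though they inherit the defect of (A) and in any case amount to re-deriving large portions of \cite{pn1}; in the present paper these parts are simply quoted.
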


The above result establishes a one-to-one correspondence between finite type~I ideals (of unitary
equivalence classes of $k$-tuples) and, up to isomorphism (in the sense of item (B) above),
coverings taking values in $\Ii(k)$. Furthermore, if $\IiI$ is any such ideal and $\Phi\dd \xxX \to
\Ii(k)$ is its covering on $(\xxX,\Mm,\NnN)$, then for every $\ueX \in \IiI$ there exists a unique
(up to $\NnN$-almost everywhere equality) cardinal-valued measurable function $m_{\ueX}\dd \xxX \to
\Card$ such that
\begin{equation}\label{eqn:pr_deco}
\ueX = \contS[\!]{x\in\xxX}{\NnN} m_{\ueX}(x) \odot \Phi(x).
\end{equation}
The above representation is called the \textit{prime decomposition} of $\ueX$ and $m_{\ueX}$ is said
to be the \textit{multiplicity} function of $\ueX$ (both in $\IiI$), see \cite{pn1}.\par
So, any characterisation of coverings helps understanding the complexity of ideals. Below we give
a simple criterion for an $\Ii(k)$-valued function to be a covering.

\begin{pro}{cov}
A measurable function $\Phi\dd \xxX \to \Ii(k)$ defined on $(\xxX,\Mm,\NnN)$ is a covering iff
for every standard set $\bbB \in \Mm$ there is $\zzZ \in \NnN$ such that $\Phi$ is one-to-one
on $\bbB \setminus \zzZ$.
\end{pro}
\begin{proof}
We only need to show the `if' part of the proposition. Assume $\bbB$ is a standard set such that
the restriction of $\Phi$ to $\bbB$ is one-to-one. We may and do assume that
$(\bbB,\Mm\bigr|_{\bbB})$ is a standard Borel space. Let $\mu$ be a finite measure
on $\Mm\bigr|_{\bbB}$ for which $\NnN(\mu) = \NnN\bigr|_{\bbB}$. Since $\bbB$ and $\Ii(k)$ are
standard Borel spaces, we conclude that $\Phi(\bbB)$ is Borel and the restriction of $\Phi$
to $\bbB$ is a Borel isomorphism. Let $\psi\dd \Ii(k) \to \ssS(k)$ be as in \THM{cov}. Denote
by $\lambda$ the transport of $\mu$ under $\psi \circ \Phi$ (that is, $\lambda(\aaA) =
\mu(\Phi^{-1}(\psi^{-1}(\aaA)))$ for any $\aaA \in \Bb(\ssS(k))$). We infer from \THM{meas} that
there is a measurable domain $\ffF \subset \ssS(k)$ such that $\lambda(\ssS(k) \setminus \ffF) = 0$.
Now we put $\zzZ \df \bbB \setminus \Phi^{-1}(\psi^{-1}(\ffF)) \in \Mm$. Observe that $\mu(\zzZ) =
0$ and hence $\zzZ \in \NnN$. What is more, $\psi(\Phi(\bbB \setminus \zzZ)) \subset \ffF$ and thus
$\psi(\Phi(\bbB \setminus \zzZ))$ is a measurable domain. So, an application of \THM{cov} finishes
the proof.
\end{proof}

With the help of \PRO{cov} we can now describe a covering for $\IiI^f_I$.

\begin{pro}{full}
Let $\mmM = \{\mu_s\}_{s \in S}$ be a maximal collection of mutually singular probabilistic Borel
measures on $\Ii(k)$. Put $\xxX \df \Ii(k) \times S$,
\begin{gather*}
\Mm \df \{\bbB \subset \xxX|\quad \forall s \in S\dd\ \bbB^s \in \Bb(\Ii(k))\},\\
\NnN \df \{\zzZ \in \Mm|\quad \forall s \in S\dd\ \bbB^s \in \NnN(\mu_s)\}
\end{gather*}
where $\bbB^s = \{\ueX \in \Ii(k)\dd\ (\ueX,s) \in \bbB\}$ for each $s \in S$; and let $\Phi\dd \xxX
\to \Ii(k)$ be the projection onto the first coordinate. Then $(\xxX,\Mm,\NnN)$ is a multi-standard
measurable space with nullity and $\Phi$ is a covering such that $\IiI(\Phi) = \IiI^f_I$.
\end{pro}
\begin{proof}
It is clear that $(\xxX,\Mm,\NnN)$ is multi-standard and $\{\Ii(k) \times \{s\}\}_{s \in S}$ is
its standard base. There is also no difficulty in checking that $\Phi$ is measurable. We shall now
show that $\Phi$ is a covering. Let $\bbB \in \Mm$ be a standard set in $\xxX$. According
to \PRO{cov}, it suffices to show that there is $\zzZ \in \NnN$ such that the restriction of $\Phi$
to $\bbB \setminus \zzZ$ is one-to-one. To this end, first observe that are only countably many
indices $s \in S$ for which $\bbB^s \notin \NnN(\mu_s)$. Denote by $S_0$ the set of all such $s \in
S$. Since the measures $\mu_s\ (s \in S_0)$ are mutually singular, we conclude from the countability
of $S_0$ that there is a collection $\{\Dd_s\}_{s \in S_0}$ of pairwise disjoint Borel subsets
of $\Ii(k)$ such that $\mu_s(\Dd_s) = 1$ for each $s \in S_0$. Then $\zzZ \df \bbB \setminus
\bigcup_{s \in S_0} (\Dd_s \times \{s\})$ is a member of $\NnN$ we searched for.\par
It remains to check that $\IiI(\Phi) = \IiI^f_I$. Inclusion `$\subset$' follows from item (D)
of \THM{prime}. The reverse inclusion may simply be deduced from the proof of Proposition~5.6.13
in \cite{pn1}. Below we repeat its main arguments. Denote by $\ueJ^f_I$ the so-called \textit{unit}
of $\IiI^f_I$ (to read more on unities of ideals, consult \S3.5 in \cite{pn1}) and put $\ueT \df
\bigoplus^{\NnN}_{x\in\xxX} \Phi(x)$. Then there exists a $k$-tuple of operators whose unitary
equivalence class $\ueX$ satisfies: $\ueJ^f_I = \ueT \oplus \ueX$ and $\ueT \disj \ueX$.
As in the proof mentioned above, it suffices to show that $\ueX$ is trivial (i.e. its
representatives act on zero-dimensional Hilbert spaces). Assuming, on the contrary, that $\ueX$ is
nontrivial, we conclude that there is a Borel probabilistic measure $\mu$ on $\IiI(k)$ such that
$\ueX_0 \disj \ueT$ and $\ueX_0 \in \IiI^f_I$ where $\ueX_0 = \bigoplus^{\NnN(\mu)}_{\ueT\in\Ii(k)}
\ueT$. (The fact that $\mu$ is defined on $\Ii(k)$ follows from the latter property of $\ueX_0$.)
Now the unitary disjointness of $\ueX_0$ and $\ueT$ implies that $\mu$ is singular to each
of the measures $\mu_s\ (s \in S)$, which contradicts the maximality of the collection $\mmM$.
\end{proof}

As a consequence of \PRO{full} and \THM{prime} we obtain

\begin{cor}{full}
Let $\mmM$, $(\xxX,\Mm,\NnN)$ and $\Phi$ be as in \PRO{full}. For every $\ueX \in \IiI^f_I$ there
exists a \textup{(}unique up to $\NnN$-almost everywhere equality\textup{)} measurable
cardinal-valued function $m_{\ueX}\dd \xxX \to \Card$ such that $\ueX = \bigoplus^{\NnN}_{x\in\xxX}
m_{\ueX}(x) \odot \Phi(x)$.
\end{cor}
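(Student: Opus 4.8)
The plan is to apply the general Prime Decomposition Theorem (\THM{prime}) together with the covering constructed in \PRO{full}, reducing the statement to a direct quotation of the uniqueness of the multiplicity function. First I would observe that \PRO{full} delivers a multi-standard measurable space with nullity $(\xxX,\Mm,\NnN)$ and a covering $\Phi\dd \xxX \to \Ii(k)$ with $\IiI(\Phi) = \IiI^f_I$. Thus $\IiI^f_I$ is exactly of the form described in item (A) of \THM{prime}. The very definition of $\IiI(\Phi)$ then gives: for each $\ueX \in \IiI^f_I$ there is at least one measurable cardinal-valued function $f\dd \xxX \to \Card$ with $\ueX = \bigoplus^{\NnN}_{x\in\xxX} f(x) \odot \Phi(x)$. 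This already yields the existence part.

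For uniqueness, I would invoke the first bullet of item (C) of \THM{prime}: if $f,g\dd \xxX \to \Card$ are measurable and $\bigoplus^{\NnN}_{x\in\xxX} f(x) \odot \Phi(x) = \bigoplus^{\NnN}_{x\in\xxX} g(x) \odot \Phi(x)$, then $f = g$ $\NnN$-almost everywhere, i.e. $\{x \in \xxX\dd f(x) \neq g(x)\} \in \NnN$. Applying this with both $f$ and $g$ representing the same class $\ueX$ shows that the representing function is unique up to $\NnN$-almost everywhere equality. Setting $m_{\ueX}$ to be any such representative completes the argument; this is precisely the prime decomposition \eqref{eqn:pr_deco} specialised to the particular covering of \PRO{full}.

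There is essentially no obstacle here: the corollary is a direct instantiation of \THM{prime} once the covering of \PRO{full} is in hand, and both the existence and the uniqueness are quoted verbatim from items (A) and (C). The only point requiring a word of care — and the closest thing to a ``main obstacle'' — is to note that \PRO{full} guarantees $\Phi$ is a covering \emph{with} $\IiI(\Phi) = \IiI^f_I$, so that \THM{prime}(A) genuinely applies to \emph{this} $\Phi$ and not merely to some abstract covering; but this was already established in \PRO{full}, so the proof is a short deduction.

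\begin{proof}
By \PRO{full}, $(\xxX,\Mm,\NnN)$ is a multi-standard measurable space with nullity and $\Phi$ is a covering satisfying $\IiI(\Phi) = \IiI^f_I$. Hence $\IiI^f_I$ has the form described in item (A) of \THM{prime} (with this particular $\Phi$). In particular, for any $\ueX \in \IiI^f_I = \IiI(\Phi)$ there is, by the very definition of $\IiI(\Phi)$, a measurable cardinal-valued function $m_{\ueX}\dd \xxX \to \Card$ with $\ueX = \bigoplus^{\NnN}_{x\in\xxX} m_{\ueX}(x) \odot \Phi(x)$. This proves existence.

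For uniqueness, suppose $f,g\dd \xxX \to \Card$ are measurable and both represent $\ueX$, i.e.
\begin{equation*}
\contS[\!]{x\in\xxX}{\NnN} f(x) \odot \Phi(x) = \ueX = \contS[\!]{x\in\xxX}{\NnN} g(x) \odot \Phi(x).
\end{equation*}
By the first bullet of item (C) of \THM{prime}, this forces $f = g$ a.e., that is, $\{x \in \xxX\dd f(x) \neq g(x)\} \in \NnN$. Therefore $m_{\ueX}$ is determined uniquely up to $\NnN$-almost everywhere equality, which is the assertion.
\end{proof}
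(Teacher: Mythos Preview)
Your proposal is correct and matches the paper's approach exactly: the paper states the corollary simply ``as a consequence of \PRO{full} and \THM{prime}'' without giving any further argument, and your proof spells out precisely that deduction (existence from the definition of $\IiI(\Phi)$ in item (A), uniqueness from the first bullet of item (C)).
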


We are now ready to turn to the main subject of the section.

\begin{dfn}{bor}
Let $\mmM$, $(\xxX,\Mm,\NnN)$ and $\Phi$ be as above. For any Borel subset $\Ff$ of $\Ii(k)$ let
$\IiI[\Ff]$ be the class of all unitary equivalence classes $\ueX$ for which $m_{\ueX}(x) = 0$ for
$\NnN$-almost all $x \in \Phi^{-1}(\Ff)$.
\end{dfn}

Since coverings are unique (up to isomorphism), the definition of $\IiI[\Ff]$ introduced below is
independent of the choice of the collection $\mmM$ of mutually singular measures.\par
Our first claim is

\begin{lem}{F-id}
For each Borel set $\Ff \subset \Ii(k)$, the class $\IiI[\Ff]$ is an ideal of unitary equivalence
classes of $k$-tuples.
\end{lem}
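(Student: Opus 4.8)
The plan is to identify $\IiI[\Ff]$ with the ideal generated by the covering $\Phi$ restricted to the complement of $\Ff$, and then to quote item~(D) of \THM{prime}. Keeping $\mmM=\{\mu_s\}_{s\in S}$, $(\xxX,\Mm,\NnN)$ and $\Phi$ as in \PRO{full}, put $\xxX'\df(\Ii(k)\setminus\Ff)\times S$, $\Mm'\df\{\bbB\in\Mm\dd\ \bbB\subset\xxX'\}$ and $\NnN'\df\{\zzZ\in\NnN\dd\ \zzZ\subset\xxX'\}$, and let $\Phi'\dd\xxX'\to\Ii(k)$ be the projection onto the first coordinate. Observe first that $\IiI[\Ff]\subset\IiI^f_I$, since $m_{\ueX}$ is defined only for $\ueX\in\IiI^f_I$.

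First I would check that $(\xxX',\Mm',\NnN')$ is a multi-standard measurable space with nullity, with standard base $\{(\Ii(k)\setminus\Ff)\times\{s\}\}_{s\in S}$ and associated standard measures $\mu_s|_{\Ii(k)\setminus\Ff}$. This is routine: $\Ii(k)\setminus\Ff$ is a Borel subset of the standard Borel space $\Ii(k)$, hence itself standard, so each piece of the base is a standard set, and the four defining conditions of multi-standardness descend from those for $(\xxX,\Mm,\NnN)$ by intersecting with $\xxX'$. Next I would verify that $\Phi'$ is a covering: it is measurable, and for disjoint $\bbB,\bbB'\in\Mm'$ one has $\bigoplus^{\NnN'}_{x\in\bbB}\Phi'(x)=\bigoplus^{\NnN}_{x\in\bbB}\Phi(x)$ and $\bigoplus^{\NnN'}_{x\in\bbB'}\Phi'(x)=\bigoplus^{\NnN}_{x\in\bbB'}\Phi(x)$ (the value of $\bigoplus^{\NnN}$ over a measurable set depends only on that set and the restriction of the field to it); since $\bbB,\bbB'$ are disjoint members of $\Mm$ and $\Phi$ is a covering, these two classes are unitarily disjoint.

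The core of the proof is the equality $\IiI[\Ff]=\IiI(\Phi')$. For $\subset$, take $\ueX\in\IiI[\Ff]$; then $\ueX\in\IiI^f_I$ and $m_{\ueX}=0$ $\NnN$-almost everywhere on $\Phi^{-1}(\Ff)=\Ff\times S$, so the restriction $f$ of $m_{\ueX}$ to $\xxX'$ is measurable, and its extension by $0$ to $\xxX$ agrees with $m_{\ueX}$ off a null set. Since a multiplicity $0$ contributes a zero-dimensional summand, $\bigoplus^{\NnN'}_{x\in\xxX'}f(x)\odot\Phi'(x)=\bigoplus^{\NnN}_{x\in\xxX}m_{\ueX}(x)\odot\Phi(x)=\ueX$, whence $\ueX\in\IiI(\Phi')$ (if $f\equiv0$ then $\ueX$ is the trivial class, which lies in every ideal). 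Conversely, if $\ueX=\bigoplus^{\NnN'}_{x\in\xxX'}f(x)\odot\Phi'(x)$ for some measurable $f\dd\xxX'\to\Card$, then extending $f$ by $0$ exhibits $\ueX=\bigoplus^{\NnN}_{x\in\xxX}\tilde f(x)\odot\Phi(x)\in\IiI(\Phi)=\IiI^f_I$, and the uniqueness clause of \COR{full} gives $m_{\ueX}=\tilde f$ off a null set, so $m_{\ueX}$ vanishes $\NnN$-almost everywhere on $\Ff\times S$, i.e.\ $\ueX\in\IiI[\Ff]$. Granting this, item~(D) of \THM{prime} applied to $\Phi'$ shows that $\IiI[\Ff]=\IiI(\Phi')$ is an ideal (indeed a finite type~I one).

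I expect the only genuinely delicate point to be the justification of the two compatibility properties of $\bigoplus^{\NnN}$ invoked above --- insensitivity to adjoining multiplicity-$0$ parts, and correct localisation to a sub-multi-standard space --- which, although immediate from the explicit formula $X_j=\bigoplus_{\alpha>0}\alpha\odot(\bigoplus_{s\in S}\int^{\oplus}_{\bbB_{s,\alpha}}T_j(x)\dint{\mu_s(x)})$ (for $\alpha>0$ the sets $\bbB_{s,\alpha}$ attached to $f$ on $\xxX'$ and to its zero-extension on $\xxX$ coincide), should be spelled out carefully. A less slick alternative avoiding restrictions of coverings is to apply \LEM{id} directly: using $\IiI[\Ff]\subset\IiI^f_I$ and the behaviour of multiplicity functions under direct sums and inflations recorded in item~(C) of \THM{prime}, conditions (ID1'), (ID2') and (ID3') reduce to the remark that a cardinal sum, or a product by a nonzero cardinal, vanishes exactly when the summand in question does.
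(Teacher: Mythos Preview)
Your argument is correct but follows a different route from the paper's. You restrict the covering $\Phi$ to $\xxX'=(\Ii(k)\setminus\Ff)\times S$, check that $(\xxX',\Mm',\NnN')$ is multi-standard and $\Phi'$ is a covering, identify $\IiI[\Ff]$ with $\IiI(\Phi')$, and then invoke item~(D) of \THM{prime}; in effect you are proving \PRO{F} (for the complementary Borel set) before it is stated. The paper instead exhibits $\IiI[\Ff]$ as a \emph{principal} ideal: it sets $\ueJ[\Ff]=\bigoplus^{\NnN}_{x\in\xxX} j_{\Ff}(x)\odot\Phi(x)$ for the appropriate characteristic function $j_{\Ff}$, shows via item~(C) of \THM{prime} that $\ueX\in\IiI[\Ff]\iff\ueX\ll\ueJ[\Ff]$, and then quotes the general fact from \cite{pn1} that $\{\ueX\dd\ \ueX\ll\ueJ\}$ is always an ideal. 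The paper's route is shorter because it avoids verifying the covering axioms on a restricted space; yours has the bonus of simultaneously producing an explicit covering for $\IiI[\Ff]$, which is precisely the content of \PRO{F}. Your alternative via \LEM{id} and item~(C) of \THM{prime} would also go through and is the most self-contained of the three. One remark: you read \DEF{bor} literally (multiplicity vanishing \emph{on} $\Phi^{-1}(\Ff)$), whereas the paper's own proof, the list of examples, and \PRO{F} all treat $\IiI[\Ff]$ as consisting of classes supported \emph{on} $\Ff$; this discrepancy is immaterial for the present lemma, since $\Ii(k)\setminus\Ff$ is again Borel and the statement is symmetric in $\Ff$ and its complement.
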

\begin{proof}
Let $\mmM$, $(\xxX,\Mm,\NnN)$ and $\Phi$ be as usual. Let $j_{\Ff}\dd \xxX \to \{0,1\} (\subset
\Card)$ be the characteristic function of $\Phi^{-1}(\Ff)$ and put $\ueJ[\Ff] \df
\bigoplus^{\NnN}_{x\in\xxX} j_{\Ff}(x) \odot \Phi(x)$. Then for an arbitrary measurable function
$m\dd \xxX \to \Card$ we have:
\begin{equation*}
\contS[\!]{x\in\xxX}{\NnN} m(x) \odot \Phi(x) \in \IiI[\Ff] \iff m(x) \cdot j_{\Ff}(x) = m(x)
\textup{ for $\NnN$-almost all } x \in \xxX.
\end{equation*}
According to Theorem~5.6.14 in \cite{pn1}, the above is equivalent to:
\begin{equation*}
\ueX \in \IiI[\Ff] \iff \ueX \ll \ueJ[\Ff]
\end{equation*}
(where $\ueA \ll \ueB$ means that $\alpha \odot \ueB = \ueA \oplus \ueS$ for some cardinal $\alpha$
and a unitary equivalence class $\ueS$). Finally, again from \cite{pn1} it follows that the class
$\{\ueX\dd\ \ueX \ll \ueJ[\Ff]\}$ is an ideal.
\end{proof}

Mimicing the proof of \PRO{cov}, one shows that

\begin{pro}{F}
Let $\Ff$ be a Borel subset of $\Ii(k)$ and let $\mmM_{\Ff} = \{\mu_s\}_{s \in S}$ be a maximal
collection of mutually singular probabilistic Borel measures on $\Ii(k)$ each of which is supported
on $\Ff$. Put $\xxX_{\Ff} \df \Ff \times S$,
\begin{gather*}
\Mm_{\Ff} \df \{\bbB \subset \xxX_{\Ff}|\quad \forall s \in S\dd\ \bbB^s \in \Bb(\Ii(k))\},\\
\NnN_{\Ff} \df \{\zzZ \in \Mm_{\Ff}|\quad \forall s \in S\dd\ \bbB^s \in \NnN(\mu_s)\}
\end{gather*}
and let $\Phi_{\Ff}\dd \xxX_{\Ff} \to \Ff$ be the projection onto the first coordinate. Then
$(\xxX_{\Ff},\Mm_{\Ff},\NnN_{\Ff})$ is a multi-standard measurable space with nullity and
$\Phi_{\Ff}$ is a covering such that $\IiI(\Phi_{\Ff}) = \IiI[\Ff]$.
\end{pro}

The details of the proof are left to the reader.\par
Below we give a few illustrative examples of ideals of the form $\IiI[\Ff]$. Recall that $k$ denotes
the length of tuples of operators. Below we (naturally) identify subsets of $\CCC$ with subsets
of $\Ii_1(1)$.
\begin{itemize}
\item $\IiI[\CCC]$ is the ideal of single normal operators;
\item $\IiI[\RRR]$ is the ideal of single selfadjoint operators;
\item $\IiI[\TTT]$ is the ideal of single unitary operators ($\TTT$ is the unit circle);
\item $\IiI[\Ii_1(k)]$ is the ideal of $k$-tuples of commuting normal operators;
\item $\IiI[\Ii(k)] = \IiI^f_I$.
\end{itemize}

The issue we want to focus on here is the problem of p-isomorphicity of ideals. To this end,
we introduce

\begin{dfn}{p-iso}
Let $\IiI$ and $\JjJ$ be two finite type~I ideals of unitary equivalence classes of, respectively,
$k$-tuples and $\ell$-tuples of operators. $\IiI$ and $\JjJ$ are said to be \textit{p-isomorphic},
if there exists a bijection $\Psi\dd \IiI \to \JjJ$ such that:
\begin{enumerate}[(p{I}1)]
\item for every nonempty collection $\{\ueX_s\}_{s \in S}$ of members of $\IiI$,
 $\Psi(\bigoplus_{s \in S} \ueX_s) = \bigoplus_{s \in S} \Psi(\ueX_s)$;
\item for any $\ueX \in \IiI$, $\Psi(\ueX) \in \Ii(\ell) \iff \ueX \in \Ii(k)$;
\item there exists a covering $\Phi\dd \xxX \to \Ii(k)$ with $\IiI(\Phi) = \IiI$ such that:
 \begin{itemize}
 \item $\Psi \circ \Phi\dd \xxX \to \Ii(\ell)$ is a covering with $\IiI(\Psi \circ \Phi) = \JjJ$;
 \item for every cardinal-valued measurable function $m\dd \xxX \to \Card$,
  \begin{equation*}
  \Psi\Bigl(\contS[\!]{x\in\xxX}{\NnN} m(x) \odot \Phi(x)\Bigr) = \contS[\!]{x\in\xxX}{\NnN} m(x)
  \odot (\Psi \circ \Phi)(x).
  \end{equation*}
 \end{itemize}
\end{enumerate}
In the above situation, $\Psi$ is said to be a \textit{p-isomorphism}.
\end{dfn}

The concept of p-isomorphisms was introduced in \cite{pn1} (see \S6.3.3 there). Two ideals which are
p-isomorphic have similar structures induced by the direct sum operation and, in a sense, the same
Borel and spectral complexities (here `spectral' refers to properties expressible by means
of coverings). Roughly speaking, whatever can be said about one of them in terms of direct sums and
coverings, it has its natural counterpart for the other ideal.\par
Our main result on p-isomorphic ideals is formulated below.

\begin{thm}{p-iso}
If $\Ff \subset \Ii(k)$ and $\Gg \subset \Ii(\ell)$ are two Borel sets of the same cardinality, then
the ideals $\IiI[\Ff]$ and $\IiI[\Gg]$ \textup{(}of unitary equivalence classes of $k$-tuples and
$\ell$-tuples, respectively\textup{)} are p-isomorphic. In particular, $\IiI[\Ff]$ is p-isomorphic
to the ideal $\IiI[\TTT]$ of all single unitary operators provided $\Ff$ is uncountable.
\end{thm}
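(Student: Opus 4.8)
The plan is to build the $p$-isomorphism $\Psi\colon \IiI[\Ff] \to \IiI[\Gg]$ by transporting multiplicity functions along a Borel isomorphism $\Theta\colon \Ff \to \Gg$. Since $\Ff$ and $\Gg$ are Borel subsets of the standard Borel spaces $\Ii(k)$ and $\Ii(\ell)$ having the same cardinality, the Borel isomorphism theorem (for uncountable Borel sets) or a trivial bijection (in the countable case) supplies such a $\Theta$. First I would fix the data from \PRO{F}: a maximal family $\mmM_{\Ff} = \{\mu_s\}_{s\in S}$ of mutually singular probability measures on $\Ii(k)$ supported on $\Ff$, together with the resulting multi-standard space $(\xxX_{\Ff},\Mm_{\Ff},\NnN_{\Ff})$ and the covering $\Phi_{\Ff}\colon \xxX_{\Ff}\to\Ff$ with $\IiI(\Phi_{\Ff}) = \IiI[\Ff]$. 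The key observation is that the push-forwards $\{\Theta_*\mu_s\}_{s\in S}$ form a maximal family of mutually singular probability measures supported on $\Gg$ (maximality is preserved because $\Theta$ is a Borel isomorphism onto $\Gg$, so a measure on $\Gg$ singular to all $\Theta_*\mu_s$ would pull back to one on $\Ff$ singular to all $\mu_s$). Hence $\Theta_*\mmM_{\Ff}$ is a legitimate choice of the family $\mmM_{\Gg}$ in \PRO{F}, and the associated covering is (up to the obvious relabelling of the index set $S$) exactly $\Theta\circ\Phi_{\Ff}$, defined on the same measurable space $(\xxX_{\Ff},\Mm_{\Ff},\NnN_{\Ff})$ — here I use that $\Theta$ being a Borel isomorphism makes $\NnN(\mu_s) = \NnN(\Theta_*\mu_s)$ correspond under the identity on the first coordinate, so the nullity structure is literally unchanged. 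Thus $\Theta\circ\Phi_{\Ff}$ is a covering with $\IiI(\Theta\circ\Phi_{\Ff}) = \IiI[\Gg]$ by \PRO{F}.

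With both coverings living on one and the same $(\xxX,\Mm,\NnN) := (\xxX_{\Ff},\Mm_{\Ff},\NnN_{\Ff})$, I would define $\Psi$ through the prime decomposition \eqref{eqn:pr_deco}: by \COR{full} (applied to $\IiI[\Ff]$, i.e. \PRO{F} plus part (A) of \THM{prime}), every $\ueX\in\IiI[\Ff]$ has a unique-a.e.\ multiplicity function $m_{\ueX}\colon\xxX\to\Card$, and I set
\begin{equation*}
\Psi(\ueX) \df \contS[\!]{x\in\xxX}{\NnN} m_{\ueX}(x) \odot (\Theta\circ\Phi_{\Ff})(x).
\end{equation*}
Part (D) of \THM{prime} guarantees the right-hand side lies in $\IiI(\Theta\circ\Phi_{\Ff}) = \IiI[\Gg]$, and since every element of $\IiI[\Gg]$ likewise has a unique-a.e.\ multiplicity function with respect to the covering $\Theta\circ\Phi_{\Ff}$, the map $\Psi$ is a bijection (its inverse is built the same way from $\Theta^{-1}$). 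Condition (pI3) is then immediate from the definition, with the witnessing covering being $\Phi_{\Ff}$ itself. For (pI1) I would invoke the additivity of prime decomposition in the multiplicity variable: part (C) of \THM{prime} gives that a direct sum $\bigoplus_s \ueX_s$ has multiplicity function $\sum_s m_{\ueX_s}$ (cardinal sum, which is well defined $\NnN$-a.e.\ because each $m_{\ueX_s}$ is a.e.\ countably-valued on each standard piece of the base, and one reduces the arbitrary collection to a countable one of unitarily disjoint pieces via \LEM{id}), whence $\Psi$ commutes with arbitrary direct sums.

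For (pI2) I would check that $\ueX\in\Ii(k)$ — i.e. $\ueX$ is (the class of) an irreducible $k$-tuple of matrices — holds iff $\Psi(\ueX)\in\Ii(\ell)$. In terms of prime decomposition, $\ueX\in\Ii(k)$ exactly when $m_{\ueX}$ takes the value $1$ at a single point of $\xxX$ modulo $\NnN$ (and $0$ elsewhere) and moreover that point corresponds to a matrix tuple, which is automatic since all values of $\Phi_{\Ff}$ already lie in $\Ii(k)$; since $\Theta\circ\Phi_{\Ff}$ has the same property on the $\Gg$ side and $\Psi$ preserves the multiplicity function verbatim, the condition transfers. The final sentence of the theorem follows by taking $\Gg = \TTT \subset \Ii_1(1) \subset \Ii(\ell)$ with $\ell = 1$: any uncountable Borel $\Ff$ has the cardinality of the continuum, as does $\TTT$, so $\IiI[\Ff]$ and $\IiI[\TTT]$ are p-isomorphic by the first part. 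I expect the main obstacle to be the bookkeeping in (pI1): verifying that the cardinal-valued sum $\sum_{s\in S}m_{\ueX_s}$ is itself a \emph{measurable} cardinal-valued function in the sense required by the formalism of \cite{pn1} (in particular that it is a.e.\ countably-valued on standard sets), which is where one must carefully use \LEM{id} to pass to a countable unitarily disjoint subfamily, together with part (C) of \THM{prime} for the two-summand case; everything else is a direct transport of structure along the Borel isomorphism $\Theta$.
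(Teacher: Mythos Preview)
Your approach is essentially the paper's: choose a Borel isomorphism $\Theta\colon\Ff\to\Gg$, push the maximal family $\mmM_{\Ff}$ forward to obtain $\mmM_{\Gg}$, observe that $\Theta\circ\Phi_{\Ff}$ is then a covering for $\IiI[\Gg]$ on the \emph{same} multi-standard space, and define $\Psi$ by keeping the multiplicity function and replacing $\Phi_{\Ff}$ by $\Theta\circ\Phi_{\Ff}$. The paper does exactly this (with $\psi$ in place of your $\Theta$ and an explicit relabelling isomorphism $\Psi_0$ of the base spaces).

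The one place where your write-up diverges is the verification of (pI1). You attempt to show directly that $m_{\bigoplus_s\ueX_s}=\sum_s m_{\ueX_s}$ and invoke \LEM{id} to reduce to countable disjoint families; but \LEM{id} is a characterisation of ideals, not a decomposition lemma for arbitrary direct sums, so that citation does not do the work you want. The paper avoids this bookkeeping entirely: it only checks the \emph{binary} identity $\Psi(\ueX\oplus\ueY)=\Psi(\ueX)\oplus\Psi(\ueY)$ from part~(C) of \THM{prime}, and then cites Theorem~6.1.4 of \cite{pn1}, which upgrades preservation of binary direct sums to preservation of arbitrary ones. That is the cleaner route and closes exactly the gap you flagged as the ``main obstacle''.
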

\begin{proof}
Since $\Ff$ and $\Gg$ are standard Borel spaces of the same cardinality, there exists a Borel
isomorphism $\psi\dd \Ff \to \Gg$ between them. Let $\mmM \df \mmM_{\Ff} = \{\mu_s\}_{s \in S}$,
$(\xxX,\Mm,\NnN) \df (\xxX_{\Ff},\Mm_{\Ff},\NnN_{\Ff})$ and $\Phi \df \Phi_{\Ff}\dd \xxX \to \Ff$ be
as in \PRO{F} (applied for $\Ff$). For each $s \in S$ let $\nu_s\dd \Bb(\Ii(\ell)) \to [0,1]$ be
the transport of $\mu_s$ under $\psi$. Then $\mmM_{\Gg} \df \{\nu_s\}_{s \in S}$ is a maximal
collection of mutually singular Borel probabilistic measures on $\Ii(\ell)$ which are supported
on $\Gg$, since $\psi$ is a Borel isomorphism. Now starting with $\mmM_{\Gg}$, we build
$(\xxX_{\Gg},\Mm_{\Gg},\NnN_{\Gg})$ and $\Phi_{\Gg}\dd \xxX_{\Gg} \to \Gg$ as in \PRO{F} (with $\Gg$
in place of $\Ff$). Then $\Phi_{\Gg}$ is a covering such that $\IiI(\Phi_{\Gg}) = \IiI[\Gg]$. Let
$\Psi_0\dd \xxX \to \xxX_{\Gg}$ be given by $\Psi_0(\ueT,s) = (\psi(\ueT),s)$. It is easy to check
that $\Psi_0$ is a bijection such that $\Mm_{\Gg} = \{\Psi_0(\bbB)\dd\ \bbB \in \Mm\}$ and
$\NnN_{\Gg} = \{\Psi_0(\zzZ)\dd\ \zzZ \in \NnN\}$. So, $\Psi_0$ is an isomorphism between measurable
spaces with nullities. We therefore conclude that $\Phi_{\Gg} \circ \Phi_0$ is a covering for
$\IiI[\Gg]$. But $\Phi_{\Gg} \circ \Psi_0 = \psi \circ \Phi$. We now define $\Psi\dd \IiI[\Ff] \to
\IiI[\Gg]$ by the rule:
\begin{equation*}
\Psi\Bigl(\contS[\!]{x\in\xxX}{\NnN} m(x) \odot \Phi(x)\Bigr) \df \contS[\!]{x\in\xxX}{\NnN} m(x)
\odot \psi(\Phi(x))
\end{equation*}
where $m\dd \xxX \to \Card$ is a cardinal-valued measurable function. Item (C) of \THM{prime}
implies that: the definition of $\Psi$ is complete and correct; $\Psi$ is a bijection; and
$\Psi(\ueX \oplus \ueY) = \Psi(\ueX) \oplus \Psi(\ueY)$ for any $\ueX, \ueY \in \IiI[\Ff]$. So,
it follows from Theorem~6.1.4 in \cite{pn1} that $\Psi$ satisfies condition (pI1). Moreover,
$\Psi(\ueX) = \psi(\ueX)$ for each $\ueX \in \Ff$, which yields (pI2) and (pI3). Now the note that
each uncountable Borel set in $\Ii(k)$ has the same cardinality as $\TTT$ completes the proof.
\end{proof}

We conclude the paper with a note concerning multiplicity theory for tuples of operators affiliated
with finite type~I von Neumann algebras.

\begin{rem}{multiplicity}
Based on the results obtained in the paper, one may easily extend, in a transparent way,
multiplicity theory for normal operators to arbitrary (finite tuples of) operators which are
affiliated with finite type~I von Neumann algebras. This idea may be realised as follows. Let $\mmM
= \{\mu_s\}_{s \in S}$, $(\xxX,\Mm,\NnN)$ and $\Phi\dd \xxX \to \Ii(k)$ be as in \PRO{full}. For
$\ueX \in \IiI^f_I$ we use $m_{\ueX}$ to denote the multiplicity function of $\ueX$ (relative
to $\Phi$). Now if $\tTT = (T_1,\ldots,T_k)$ acts in a separable Hilbert space and its unitary
equivalence class $\ueT$ belongs to $\IiI^f_I$, then there is a standard set $\bbB \subset \xxX$
such that $m_{\ueT}(x) = 0$ for $\NnN$-almost all $x \in \xxX \setminus \bbB$ and $m_{\ueT}(x)
\leqsl \aleph_0$ for each $x \in \bbB$ (see e.g. item (h) in Theorem~5.6.14 in \cite{pn1}). These
imply that we may think of $m_{\ueT}$ as of a function with values in $\{0,1,2,\ldots,\infty\}$.
Moreover, since $\bbB$ is standard, the set $S_0 = \{s \in S\dd\ \bbB^s \notin \NnN(\mu_s)\}$ is
countable (finite or not), say $S_0 = \{s_n\dd\ 1 \leqsl n < N\}$ (where $N$ is finite or not).
Further, we can find a collection $\{\Ss_n\dd\ 1 \leqsl n < N\}$ of pairwise disjoint Borel subsets
of $\Ii(k)$ such that $\mu_{s_n}(\Ss_n) = 1$ whenever $n < N$. Now define a measure $\mu_{\tTT}\dd
\Bb(\Ii(k)) \to [0,1]$ as $\mu_{\tTT} \df \sum_{n < N} \frac{1}{2^n} \mu_{s_n}$ and a function
$m_{\tTT}\dd \Ii(k) \to \{0,1,2,\ldots,\infty\}$ by the rules: $m_{\tTT}(\ueX) = m_{\ueT}(\ueX,s_n)$
for $\ueT \in \Ss_n$ (with $n < N$) and $m_{\ueT}(\ueX) = 0$ for $\ueX \notin \bigcup_{n<N} \Ss_n$.
In this way to each $k$-tuple $\tTT$ we have assigned a finite Borel measure $\mu_{\tTT}$
on $\Ii(k)$ and a measurable function $m_{\tTT}\dd \Ii(k) \to \{0,1,2,\ldots,\infty\}$. One may show
that two $k$-tuples $\tTT$ and $\sSS$ (acting in separable Hilbert spaces) are unitarily equivalent
iff the measures $\mu_{\tTT}$ and $\mu_{\sSS}$ are mutually absolutely continuous and
$m_{\tTT}(\ueX) = m_{\sSS}(\ueX)$ for $\mu_{\tTT}$-almost all $\ueX \in \Ii(k)$. The details are
left to the reader.
\end{rem}


\begin{thebibliography}{20}

\bibitem{b-k} H. Becker and A.S. Kechris,
 \textit{The Descriptive Set Theory of Polish Group Actions} 
 (London Math. Soc. Lecture Note Series, vol. 232),
 University Press, Cambridge, 1996.

\bibitem{bfh} A. Brown, C.-K. Fong, D.W. Hadwin,
 \textit{Parts of operators on Hilbert space},
 Illinois J. Math. \textbf{22} (1978), 306--314.

\bibitem{cas} C. Castaing,
 \textit{Quelques probl\`{e}mes de mesurabilit\'{e} li\'{e}es \`{a} la th\'{e}orie de la commande},
 C. R. Paris \textbf{262} (1966), 409--411.

\bibitem{con} J.B. Conway,
 \textit{A Course in Functional Analysis}
 (Graduate Texts in Mathematics, vol. 96),
 Springer, New York, 1990.

\bibitem{dix} J. Dixmier,
 \textit{$C^*$-algebras},
 North-Holland Publ. Co., Amsterdam, 1977.

\bibitem{ern} J. Ernest,
 \textit{Charting the operator terrain},
 Mem. Amer. Math. Soc. \textbf{171} (1976), 207 pp.

\bibitem{kec} A.S. Kechris,
 \textit{Classical Descriptive Set Theory}
 (Graduate Texts in Mathematics, Volume 156),
 Springer-Verlag, New York, 1995.

\bibitem{k-m} K. Kuratowski and A. Mostowski,
 \textit{Set Theory with an Introduction to Descriptive Set Theory},
 PWN -- Polish Scientific Publishers, Warszawa, 1976.

\bibitem{lon} R. Longo,
 \textit{Solution of the factorial Stone-Weierstrass conjecture. An application of the theory
  of standard split $W^*$-inclusions},
 Invent. Math. \textbf{76} (1984), 145--155.

\bibitem{pn1} P. Niemiec,
 \textit{Unitary equivalence and decompositions of finite systems of closed densely defined
  operators in Hilbert spaces},
 Dissertationes Math. (Rozprawy Mat.) \textbf{482} (2012), 1--106.

\bibitem{pn2} P. Niemiec,
 \textit{Elementary approach to homogeneous $C^*$-algebras},
 to appear (\texttt{http://arxiv.org/abs/1203.0857}).

\bibitem{pop} S. Popa,
 \textit{Semiregular maximal abelian $*$-subalgebras and the solution to the factor state
  Stone-Weierstrass problem},
 Invent. Math. \textbf{76} (1984), 157--161.

\bibitem{neu} J. von Neumann,
 \textit{On Rings of Operators. Reduction Theory},
 Ann. Math. \textbf{50} (1949), 401--485.

\end{thebibliography}
\end{document}